\tikzstyle{state}=[circle,thick,draw=black!80]
\def\dR{\mathbb{R}}
\def\calM{\mathcal{M}}
\def\calB{\mathcal{B}}
\theoremstyle{plain}
\newtheorem{theorem}{Theorem}
\newtheorem{claim}[theorem]{Claim}
\newtheorem{remark}[theorem]{Remark}
\theoremstyle{definition}
\newtheorem{example}{Example}
\newcommand{\PP}{\mathbb{P}}
\newcommand{\EE}{\mathbb{E}}
\newcommand{\NN}{\mathbb{N}}
\tikzstyle{state}=[circle,thick,draw=black!80,minimum size=30pt]
\tikzstyle{rand}=[circle,thick,draw=black!80,fill=black,inner sep=0pt,minimum size=7pt]
\begin{document}

\title{Reachability and safety objectives in Markov decision processes on long but finite horizons%
\thanks{We are very grateful to Laurent Miclo for a very helpful discussion 
and for pointing us at the related literature in linear algebra. 
We are also grateful to Mickael Randour for a very helpful discussion on reachability and safety objectives and his comments on a draft of this paper, and to Robert Israel and Omri Solan for providing the two proofs of Theorem~\ref{them eigen}.}%
\thanks{This work has been partly supported by COST Action CA16228 European Network for Game Theory.
Ashkenazi-Golan and Solan acknowledge the support of the Israel Science Foundation, grants \#217/17 and \#722/18, 
and the NSFC-ISF Grant \#2510/17.}}
\author{Galit Ashkenazi-Golan%
\footnote{School of Mathematical Sciences, Tel-Aviv University, Tel-Aviv, Israel, 6997800, 
E-mail: galit.ashkenazi@gmail.com.}\and 
J\'{a}nos Flesch\footnote{Department of Quantitative Economics, 
Maastricht University, P.O.Box 616, 6200 MD, The Netherlands. E-mail: j.flesch@maastrichtuniversity.nl.}
\and Arkadi Predtetchinski\footnote{Department of Economics, Maastricht University, P.O.Box 616, 6200 MD, 
The Netherlands. E-mail: a.predtetchinski@maastrichtuniversity.nl.}\and 
Eilon Solan\footnote{School of Mathematical Sciences, Tel-Aviv University, Tel-Aviv, Israel, 6997800, E-mail: eilons@post.tau.ac.il.}}
\maketitle

We consider discrete-time Markov decision processes in which the decision maker is
interested in long but finite horizons. 
First we consider reachability objective: 
the decision maker's goal is to reach a specific target state with the highest possible probability. 
Formally, strategy $\sigma$ overtakes another strategy $\sigma'$, 
if the probability of reaching the target state within horizon $t$ is larger under 
$\sigma$ than under $\sigma'$, for all sufficiently large $t\in\NN$. 
We prove that there exists a pure stationary strategy that is not overtaken by any pure strategy nor 
by any stationary strategy, 
under some condition on the transition structure and respectively under genericity.
A strategy that is not overtaken by any other strategy, called an overtaking optimal strategy, 
does not always exist. We provide sufficient conditions for its existence.

Next we consider safety objective: 
the decision maker's goal is to avoid a specific state with the highest possible probability. 
We argue that the results proven for reachability objective extend to this model.
We finally discuss extensions of our results to two-player zero-sum perfect information games.

\bigskip
\noindent {\emph{JEL  classification:} C73.

\bigskip

\noindent \textbf{Keywords:} {Markov decision process, reachability objective, safety objective, overtaking optimality, perfect information games, Perron-Frobenius eigenvalue.}

\lineskip=1.8pt\baselineskip=18pt\lineskiplimit=0pt \count0=1

}

\section{Introduction}
We consider discrete-time Markov decision processes (MDP) with finite state and action spaces. We consider two different types of objectives for the decision maker: reachability objectives and safety objectives. The decision maker is said to have a reachability objective if his goal is to reach a specific state of the MDP with the highest possible probability, and the decision maker is said to have a safety objective if his goal is the opposite: to avoid a specific state of the MDP with the highest possible probability. Both objectives are standard and have been analyzed extensively in the literature, but they are quite different in nature 
(see, e.g., Baier and Katoen (2008), Chatterjee and Henzinger (2012) and Bruy\`{e}re (2017)).

An important question is on which time horizon the decision maker evaluates his strategies. On any given finite horizon, backward induction guarantees that the decision maker has a pure optimal strategy. Note that this optimal strategy can depend heavily on the horizon, and generally there is no strategy that is optimal on all finite horizons. On the infinite horizon, the decision maker has a pure stationary optimal strategy (cf. Howard (1960) and Blackwell (1962)\footnote{Howard (1960) and Blackwell (1962) consider regular MDPs where the decision maker receives a payoff at every period depending on the state and the action played, and his goal is to maximize the total discounted sum or the long-term average of the payoffs that he receives during the play. Our MDP with a reachability objective can be transformed into a regular MDP (see also Sections 2 and 3) by making the target state absorbing and assigning payoff 1 to the target state and payoff 0 to all other states.}).

In this paper, instead of considering a fixed horizon, we propose to evaluate strategies by how they perform on all long but finite horizons. In particular, such an evaluation can be meaningful if the decision maker knows that the decision process will last long, but he has no information on its exact length. In the case of reachability objectives, such 
an evaluation may also reflect the attitude of a decision maker who is patient and 
can wait for many periods to reach the target state.

More precisely, when the decision maker has a reachability objective with target state $s^*$, we say that a strategy $\sigma$ overtakes another strategy $\sigma'$ if there exists $T\in\NN$ such that, on all finite horizons $t\geq T$, the probability of having visited the state $s^*$ within horizon $t$ is strictly larger under $\sigma$ than under $\sigma'$. Thus, conditionally on the MDP lasting at least $T$ periods, the strategy $\sigma$ performs better than $\sigma'$ 
regardless of the horizon, and consequently the decision maker should prefer $\sigma$ to $\sigma'$. When the decision maker has a safety objective and wants to avoid a state $s^*$, we say that a strategy $\sigma$ overtakes another strategy $\sigma'$ if there exists $T\in\NN$ such that, on all finite horizons $t\geq T$, the probability of having visited the state $s^*$ within horizon $t$ is strictly smaller under $\sigma$ than under $\sigma'$.

We also define a more permissive version of the aforementioned relations between strategies. For reachability objectives, we say that a strategy $\sigma$ weakly overtakes another strategy $\sigma'$ if there exists $T\in\NN$ such that, on all finite horizons $t\geq T$, the probability of having visited the target state $s^*$ within horizon $t$ under $\sigma$ is at least as much as that under $\sigma'$, but strictly more for infinitely many horizons $t$. A similar definition can be given for safety objectives.

Under these comparisons of strategies, we call a strategy overtaking optimal if it is not overtaken by any other strategy, and call it strongly overtaking optimal if it is not weakly overtaken by any other strategy. Strong overtaking optimality is a strict refinement of overtaking optimality, and as an appealing property, they are both strict refinements of optimality on the infinite horizon. \medskip

\noindent\textbf{Our contribution.} Our main results can be summarized as follows.

(I) For reachability objectives, we obtain the following results, sorted by the attributes of the MDP:

(I.1) We prove that if the MDP is such that each action can lead to at most one non-target state with a positive probability, then there exists a pure stationary strategy that is not weakly overtaken by any pure strategy. This is Theorem~\ref{theorem-det}. We show with Example \ref{example each pure} that such a statement does not hold for all MDPs.

(I.2) What does hold for all MDPs is Theorem~\ref{theorem non deter}, stating that if, from every initial state, there is a strategy that weakly overtakes all other strategies, then there exists a stationary strongly overtaking optimal strategy.

(I.3) We show by means of Example \ref{example: 2} that an overtaking optimal strategy does not always exist. This MDP  is however constructed in a very specific way and has some typically non-generic properties.

(I.4) We consider MDPs that are ``generic'', in the sense that the transition probabilities are randomized using any non-trivial joint density function. We show for these MDPs that there exists a pure stationary strategy that overtakes each other stationary strategy. This is Theorem \ref{gen-thm}. 

(II) For safety objectives, we argue that the same results hold.

Besides, we briefly discuss extensions to two-player zero-sum perfect information games. In these games, each state is controlled by one of the players. Player 1's goal is to reach a specific state (reachability objective), and player 2's goal is to prevent it (safety objective).
\medskip

\noindent\textbf{Proof techniques.} We use quite different proof techniques to obtain our results. For proving result (I.1), we transform the MDP with the reachability objective into a regular MDP, by assigning payoffs to actions based on the immediate transition probabilities to the target state. In this new MDP, we invoke some results in Flesch et al.\ (2017) to derive a specific pure stationary strategy. We show that this strategy is exactly the desired strategy in the original MDP with the reachability objective. This proof technique is suitable for pure strategies, but probably also limited to them, as the relation between the two MDPs is much weaker for non-pure strategies.

When considering generic MDPs in result (I.4), we rely on techniques from linear algebra. The overtaking comparison between two stationary strategies can be reduced to the comparison of the spectral gaps of the transition matrices that these strategies induce. The spectral gap of a transition matrix refers 
to the difference between the largest eigenvalue, which is equal to 1, and the modulus of the second eigenvalue, which can be a complex number. 
To obtain result (I.4), we need to compare the spectral gaps of transition matrices induced by stationary and pure stationary strategies.

Result (I.2) is proven in a constructive way. The mixed actions of a strategy that weakly overtakes all other strategies can be used for the mixed actions of the desired stationary strategy.

The results for (II) are proven similarly.\bigskip

\noindent \textbf{Related literature.} Reachability and safety problems were studied both in the MDP framework and in the context of two-player zero-sum games, for an overview we refer to Baier and Katoen (2008) and respectively to Chatterjee and Henzinger (2012) and Bruy\`{e}re (2017). An important distinction is made between the qualitative and the quantitative approaches. The qualitative approach is interested in the probability with which the decision maker 
succeeds to meet his objective. For the quantitative approach, however, it also matters how quickly the target state is reached in the case of a reachability objective, or how long the bad state has been avoided in the case of a safety objective. Our overtaking approach could thus be classified as a quantitative approach on the infinite horizon. For other quantitative approaches, we refer to Randour et al. (2015, 2017) and the references therein, and to Brihaye et al. (2014) in a game setting.

In the game-theoretic literature, various definitions of overtaking optimality have been proposed. They all serve as a refinement of optimality on the infinite horizon, based on the performance of strategies on the finite horizons. For an overview of some of these concepts, we refer to Stern (1984), Puterman (1994), Carlson et al (1991), Zaslavski (2006, 2014), Guo and Hern\`{a}ndez--Lerma (2009), and M\'{e}der et al. (2012). Our definition of overtaking optimality is a relatively direct translation of the definitions of sporadic overtaking optimality in Stern (1984) and Flesch et al. (2017) and repeated optimality in M\'{e}der et al. (2012), into the context of MDPs with reachability and safety objectives.

One important feature of our definition of overtaking optimality is that it does not require the strategy to outperform all other strategies on long but finite horizons. It only requires that the strategy is not outperformed by any other strategy. Our definition of overtaking optimality is therefore weaker than overtaking optimality and uniform overtaking optimality as in Stern (1984), and weaker than strong overtaking optimality as in Nowak and Vega-Amaya (1999) 
or Leizarowitz (1996). See also M\'{e}der et al. (2012), who delineates ''not-outperformed'' definitions of optimality from ``outperform-all'' definitions of optimality. \bigskip

\noindent \textbf{Organization of the paper.} Section 2 details the model. 
Then, we start by analyzing reachability objectives. Section~3 provides an example which highlights different aspects of the concept of overtaking optimality by comparing it with other optimality notions. Section~4 presents the results concerning deterministic MDPs. Section~5 considers non-deterministic MDPs. Section~6 treats generic MDPs. 
In Section~7 we turn our attention to safety objectives. Section~8 concludes.

\section{The model}

\subsection{MDPs with a reachability objective}

\noindent\textbf{The model.} An \emph{MDP with a reachability objective} is given by [1] a finite set $S$ of states, with a specific state $s^*\in S$, called the target state, [2] for each state $s\in S$, a nonempty and finite set $A(s)$ of actions, and [3] for each state $s\in S$ and each action $a\in A(s)$, a probability distribution $p(s,a)$ on the set $S$ of states.

An MDP with a reachability objective is played at periods in $\mathbb{N}=\{1,2,\ldots\}$ as follows: At period 1, in a given initial state $s_1\in S\setminus\{s^*\}$, the decision maker chooses an action $a_1\in A(s_1)$, which leads to a state $s_2 \in S$ drawn from the distribution $p(s_1,a_1)$. At period 2, in state $s_2$, the decision maker chooses an action $a_2\in A(s_2)$, leading to state $s_3$ drawn from $p(s_2,a_2)$, and so on. The decision maker's goal is that state $s^*$ is eventually reached, that is, $s_t=s^*$ for some $t\in\NN$.\medskip

\noindent\textbf{Histories.} The \emph{history at period $t\in\mathbb{N}$} is a sequence $(s_1,a_1,\ldots,s_{t-1},a_{t-1},s_t)$ such that $s_i \in S$ for all $i=1,\ldots,t$, and $a_i\in A(s_i)$ and $p(s_{i+1}|s_i,a_i) > 0$ for all $i=1,\ldots,t-1$. We denote by $H_t$ the set of all histories at period $t$, and by $H=\cup_{t\in \mathbb{N}}H_t$ the set of all histories. Let $s(h)$ denote the final state of each history $h\in H$.
Let $H_{\infty}$ be the set of all \emph{infinite histories}, i.e., the set of sequences $(s_1,a_1,s_2,a_2,\dots)$ such that $s_i \in S$, $a_i \in A(s_i)$, and $p(s_{i+1}|s_i,a_i) > 0$ for each $i \in \mathbb{N}$.\medskip

\noindent\textbf{Strategies.} A \emph{mixed action} in a state $s\in S$ is a probability distribution on $A(s)$. The set of mixed actions in state $s$ is denoted by $\Delta(A(s))$.

A \emph{strategy} $\sigma$ is a map that, to each history $h\in H$, assigns a mixed action $\sigma(h)\in \Delta(A(s(h)))$. The interpretation is that, if history $h$ arises, $\sigma$ recommends to choose an action in the current state $s(h)$ according to the probabilities given by the mixed action $\sigma(h)$.
A strategy $\sigma$ is called \emph{pure} if $\sigma(h)$ places probability 1 on one action, for each history $h$. A strategy $\sigma$ is called \emph{stationary} if the recommendation of the action only depends on the current state, i.e., $\sigma(h)=\sigma(h')$ whenever $s(h)=s(h')$. Note that a pure stationary strategy can be seen as an element of $\times_{s\in S}A(s)$.

Starting from an initial state $s$, a strategy $\sigma$ induces a  probability measure $\mathbb{P}_{s\sigma}$ on $H_{\infty}$, where $H_{\infty}$ is endowed with the sigma--algebra generated by the cylinder sets. We denote the corresponding expectation operator by $\mathbb{E}_{s\sigma}$.\medskip

\noindent\textbf{Value and optimality.} Let $t^*$ denote the first period when state $s^*$ is reached. If $s^*$ is never reached then we define $t^*=\infty$. For initial state $s\in S$, the \emph{value} $v(s)$ is defined as $v(s)=\sup_{\sigma}\PP_{s\sigma}(t^*<\infty)$, which is the maximal probability that state $s^*$ can be reached. A strategy $\sigma$ is called 
\emph{optimal} at the initial state $s$ if $\PP_{s\sigma}(t^*<\infty)=v(s)$. 
Note that, for any strategy $\sigma$, 
it holds that as $t$ goes to infinity, 
the quantity $\PP_{s\sigma}(t^*\leq t)$ increases and converges to $\PP_{s\sigma}(t^*<\infty)$.
Thus, $\sigma$ is optimal at the initial state $s$ if and only if $\lim_{t\to\infty}\PP_{s\sigma}(t^*\leq t)=v(s)$. 
It is known that the decision maker always has a pure stationary strategy 
that is optimal at all initial states (cf. Howard (1960) and Blackwell (1962)). \medskip

\noindent\textbf{Overtaking optimality.} 
We say that a strategy $\sigma$ \emph{overtakes} a strategy $\sigma'$ at the initial state $s$ 
if there is $T\in\NN$ such that for all periods $t\geq T$ we have $\PP_{s\sigma}(t^*\leq t)\,>\, \PP_{s\sigma'}(t^*\leq t)$. This means that, for all periods $t\geq T$, the probability under $\sigma$ to reach $s^*$ within the first $t$ periods is strictly larger than that under $\sigma'$. If the decision maker is sufficiently patient with regard to his goal to reach the target state, then he strictly prefers $\sigma$ to $\sigma'$.

Note that two strategies can be incomparable in the sense that neither of them overtakes the other one. Indeed, consider the following example. The state space is $\{x,s^*\}$. In state $x$, the decision maker has three actions: $a_0,a_{1/2},a_{7/8}$. 
For $z \in \{0,1/2, 7/8\}$, under action $a_{z}$, 
the play moves to state $s^*$ with probability $z$, 
and thus remains in state $x$ with probability $1-z$. Now suppose that $\sigma$ recommends to always play action $a_{1/2}$, and $\sigma'$ recommends to play the sequence of actions $a_0,a_{7/8},a_0,a_0,a_{7/8},a_0,\ldots$ as long as the play is in state $x$. Then, at periods $t=3k+1$, where $k\in\NN$, we have $\PP_{s\sigma}(t^*\leq t)\,=\, \PP_{s\sigma'}(t^*\leq t)\,=\,(7/8)^k$. At periods $t=3k+2$, we have $\PP_{s\sigma}(t^*\leq t)\,>\, \PP_{s\sigma'}(t^*\leq t)$. At periods $t=3k$, we have $\PP_{s\sigma}(t^*\leq t)\,<\, \PP_{s\sigma'}(t^*\leq t)$. So, $\sigma$ and $\sigma'$ are incomparable.

A strategy $\sigma$ is called \emph{overtaking optimal} at the initial state $s$ if there is no strategy that overtakes $\sigma$ at that initial state. That is, $\sigma$ is maximal with respect to the relation of ``overtakes'' between strategies.

Note that any optimal strategy overtakes any strategy that is not optimal. 
Indeed, if $\sigma$ is optimal at the initial state $s$ 
but $\sigma'$ is not, then $\PP_{s\sigma}(t^*<\infty)\,=v(s)\,>\, \PP_{s\sigma'}(t^*<\infty)$, 
and hence $\PP_{s\sigma}(t^*\leq t)\,>\, \PP_{s\sigma'}(t^*\leq t)$ for all sufficiently large $t$. 
Consequently, an overtaking optimal strategy at the initial state $s$ is also optimal at that initial state. 
As Example~\ref{example: 1} below will show, the converse is not true: 
there exist optimal strategies at an initial state
that are not overtaking optimal at that initial state. 
Thus, overtaking optimality is a strict refinement of optimality.
\medskip

\noindent\textbf{Strong overtaking optimality.}
We say that a strategy $\sigma$ \emph{weakly overtakes} a strategy $\sigma'$ at the initial state $s$
if there is $T\in\NN$ such that for all periods $t\geq T$ we have 
$\PP_{s\sigma}(t^*\leq t)\,\geq\, \PP_{s\sigma'}(t^*\leq t)$ 
with strict inequality for infinitely many $t$. 
Note that if $\sigma$ overtakes $\sigma'$ at the initial state $s$ 
then $\sigma$ also weakly overtakes $\sigma'$ at that initial state.

A strategy $\sigma$ is called \emph{strongly overtaking optimal} at the initial state $s$ 
if there is no strategy that weakly overtakes $\sigma$ at that initial state. 
A strongly overtaking optimal strategy at an initial state is also overtaking optimal at that initial state. \medskip

\subsection{MDPs with a safety objective}

The model of MDPs with a safety objective is very similar to the model of MDPs with a reachability objective, 
except that the decision maker's objective is to reach the state $s^*$ with as low a
probability as possible.
\medskip

\noindent\textbf{Value and optimality.} 
For initial state $s\in S$, the \emph{value} $v(s)$ is defined as 
$v(s)=\inf_{\sigma}\PP_{s\sigma}(t^*<\infty)$. 
A strategy $\sigma$ is called \emph{optimal} at the initial state $s$ 
if $\PP_{s\sigma}(t^*<\infty)=v(s)$. 
Also in this model it is known that the decision maker always has a pure stationary strategy 
that is optimal at all initial states (cf. Howard (1960) and Blackwell (1962)). 
\medskip

\noindent\textbf{Overtaking optimality.} 
We say that a strategy $\sigma$ \emph{overtakes} a strategy $\sigma'$ at the initial state $s$
if there is $T\in\NN$ such that for all periods $t\geq T$ we have 
$\PP_{s\sigma}(t^*\leq t)\,<\,\PP_{s\sigma'}(t^*\leq t)$. 
A strategy $\sigma$ is called \emph{overtaking optimal} at the initial state $s$ 
if there is no strategy that overtakes $\sigma$ at that initial state. \medskip

\noindent\textbf{Strong overtaking optimality.} 
We say that a strategy $\sigma$ \emph{weakly overtakes} a strategy $\sigma'$ at the initial state $s$ 
if there is $T\in\NN$ such that for all periods $t\geq T$ we have 
$\PP_{s\sigma}(t^*\leq t)\,\leq\, \PP_{s\sigma'}(t^*\leq t)$ with strict inequality for infinitely many $t$. 
Note that if $\sigma$ overtakes $\sigma'$ at the initial state $s$ 
then $\sigma$ also weakly overtakes $\sigma'$ at that initial state.

A strategy $\sigma$ is called \emph{strongly overtaking optimal} at the initial state $s$ 
if there is no strategy that weakly overtakes $\sigma$ at that initial state. 
A strongly overtaking optimal strategy at the initial state $s$ is also overtaking optimal 
at that initial state.

\subsection{Discounted and average payoff MDPs} 
We will also consider MDPs with the discounted payoff or with the average payoff, but only as auxiliary models in our analysis.

A \emph{discounted MDP} can be described similarly to an MDP with a reachability or safety objective, 
but with the following modifications: (1) For each state $s\in S$ and each action $a\in A(s)$, 
there is a payoff $u(s,a)\in\mathbb{R}$, which the decision maker receives when action $a$ 
is played in state $s$. 
(2) The decision maker's goal is not to reach a target state $s^*$, 
but rather to maximize the expectation of the total discounted payoff
\[(1-\beta)\cdot(u(s_1,a_1)+\beta\cdot u(s_2,a_2)+\beta^2\cdot u(s_3,a_3)+\cdots), \]
where $\beta\in(0,1)$ is the discount factor. 
Strategies, the discounted-value, and discounted-optimality are defined 
analogously to the corresponding definitions for reachability objective. 
By the results of Shapley (1953) and Blackwell (1962), 
it is known that for each discount factor $\beta\in(0,1)$, 
the decision maker has a pure stationary strategy that is $\beta$-discounted optimal at all initial states. 
Moreover, the decision maker has a pure stationary strategy that is 
$\beta$-discounted optimal for all $\beta$ close to 1 and all initial states. 
Such a strategy is also called \emph{Blackwell optimal}.

An \emph{average payoff MDP} is similar to a discounted MDP, except that (2') the decision maker's goal is to maximize the expectation of the average payoff
\[\liminf_{T\to\infty}\ \frac{1}{T}\sum_{t=1}^T u(s_t,a_t).\]
The average-value, and average-optimality are defined
analogously to the corresponding definitions for reachability objective. 
By Howard (1960) and Blackwell (1962), 
it is known that the decision maker has a pure stationary strategy
that is average-optimal at all initial states. 
Moreover, each Blackwell optimal strategy is average optimal at all initial states.

\section{Reachability objectives: An illustrative example}
In this section, we discuss a specific MDP with a reachability objective, which demonstrates three properties of overtaking optimality.
First, there are optimal strategies that are not overtaking optimal. That is, overtaking optimality is a strict refinement of optimality.
Second, finding overtaking optimal strategies cannot be done by simply solving a related discounted MDP.
Third, the strategy that minimizes the expected time of reaching the target state $s^*$ can be different from the overtaking optimal strategies, even when the latter is unique.
 \medskip

\begin{example}\label{example: 1}
Consider an MDP with a reachability objective which has state space $S=\{x,y,z,s^*\}$ such that:
\begin{itemize}
    \item State $x$ is the initial state. In this state, the decision maker has two actions: $a$ and $b$. Action $a$ leads to state $s^*$ with probability $q$ and to state $y$ with probability $1-q$. Action $b$ leads to state $s^*$ with probability $\frac{1}{2}$ and to state $z$ with probability $\frac{1}{2}$.
    \item In state $y$, there is only one action, denoted by $c$, which leads to state $s^*$ with probability $q$ and to state $y$ with probability $1-q$.
    \item In state $z$, there is only one action, denoted by $d$, which leads to state $s^*$ with probability $p$ and to state $z$ with probability $1-p$.
    \item State $s^*$ is the state that the decision maker tries to reach. This state is absorbing.
\end{itemize}
The probabilities $p$ and $q$ are such that $0<p<q<\frac{2p}{2p+1}$. For example, $p=0.1$ and $q=0.11$. Note that $p<\frac{2p}{2p+1}$ implies $p<1/2$, and so $q<\frac{2p}{2p+1}$ implies $q<1/2$ as well.

The MDP is depicted in Figure \ref{fig2}. 
In this figure, the state $s^*$ is omitted for simplicity, 
and the names of the actions are provided together with the corresponding probabilities of 
moving to state $s^*$.

\begin{figure}[H]\label{fig2}
\begin{center}
\begin{tikzpicture}[scale=0.8,>=stealth',shorten >=1pt,auto,node distance=2.8cm]
\node at(0,3) [state] (s1) {$x$};
\node at(6,6) [state] (s2) {$y$};
\node at(6,0) [state] (s3) {$z$};
\path[->,thick]
 (s2) edge[loop] node[above] {$c:q$} ()
 (s3) edge[loop] node[above] {$d:p$} ()
 (s1) edge node[above] {$a:q$} (s2)
 (s1) edge node[above] {$b:\frac{1}{2}$}(s3);
\end{tikzpicture}
\end{center}
\caption{The MDP in Example~\ref{fig2}.}
\end{figure}
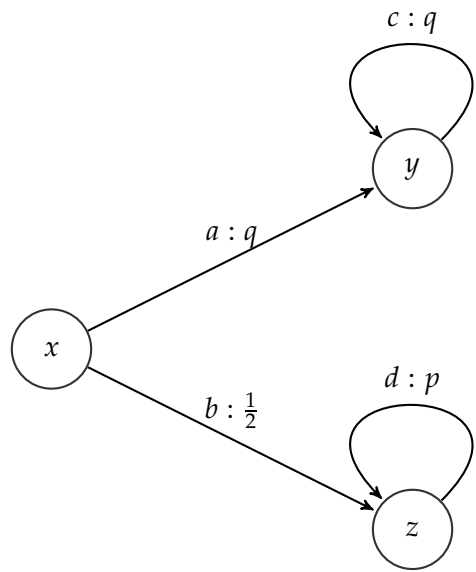
\bigskip

\noindent

Suppose that the initial state is $x$.
Choosing the action $a$ at state $x$ leads to the following sequence of probabilities 
of moving to state $s^*$: $(q,q,q,\ldots)$.
Choosing the action $b$ at state $x$ leads to the following sequence of probabilities 
of moving to state $s^*$: $(1/2,p,p,\ldots)$. 
The target state $s^*$ is eventually reached with probability 1 under both actions $a$ and $b$. 
Thus, both actions are optimal in the reachability problem.

\begin{claim}
In Example \ref{example: 1}, action $a$ is overtaking optimal, and action $b$ is not.
\end{claim}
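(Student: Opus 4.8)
The plan is first to observe that, starting from $x$, a strategy is determined up to a single number. The only history at period $1$ is $(x)$, so a strategy $\sigma$ is free only in the mixed action $\sigma(x)$; write $\lambda=\sigma(x)(a)\in[0,1]$ for the probability it puts on $a$. Once the play leaves $x$ it is in $y$, $z$, or $s^*$, and $y$ and $z$ admit a unique action each while $s^*$ is absorbing, so nothing further is chosen. Conditioning on the first transition, and using that from $y$ (resp.\ $z$) the state $s^*$ is missed for $k$ more periods with probability $(1-q)^k$ (resp.\ $(1-p)^k$), I would compute, for every $t\ge 2$,
\[
\PP_{x\sigma}(t^*>t)\;=\;\lambda\,(1-q)^{t-1}\;+\;\tfrac{1-\lambda}{2}\,(1-p)^{t-2}.
\]
In particular, writing $\sigma_a$ and $\sigma_b$ for the strategies that play $a$ resp.\ $b$ at $x$, this gives $\PP_{x\sigma_a}(t^*>t)=(1-q)^{t-1}$ and $\PP_{x\sigma_b}(t^*>t)=\tfrac12(1-p)^{t-2}$.

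The single analytic input is that, since $0<p<q$ (hence $0<1-q<1-p<1$), we have $(1-q)^{t-1}\big/\big(\tfrac12(1-p)^{t-2}\big)=2(1-q)\big(\tfrac{1-q}{1-p}\big)^{t-2}\to 0$, so there is some $T\in\NN$ with $(1-q)^{t-1}<\tfrac12(1-p)^{t-2}$ for all $t\ge T$. For the part ``$b$ is not overtaking optimal'', it is enough to produce one strategy that overtakes $\sigma_b$, and $\sigma_a$ works: for $t\ge T$, $\PP_{x\sigma_a}(t^*\le t)=1-(1-q)^{t-1}>1-\tfrac12(1-p)^{t-2}=\PP_{x\sigma_b}(t^*\le t)$.

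For the part ``$a$ is overtaking optimal'', I would show no strategy $\sigma$ overtakes $\sigma_a$. If its parameter is $\lambda=1$, then $\sigma$ agrees with $\sigma_a$ on every history reachable from $x$, so $\PP_{x\sigma}=\PP_{x\sigma_a}$ and $\sigma$ does not overtake $\sigma_a$. If $\lambda<1$, the displayed formula yields
\[
\PP_{x\sigma}(t^*>t)-\PP_{x\sigma_a}(t^*>t)\;=\;(1-\lambda)\Big(\tfrac12(1-p)^{t-2}-(1-q)^{t-1}\Big)\;>\;0\qquad\text{for all }t\ge T,
\]
so $\PP_{x\sigma}(t^*\le t)<\PP_{x\sigma_a}(t^*\le t)$ for all $t\ge T$, and again $\sigma$ does not overtake $\sigma_a$. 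Hence $\sigma_a$ is overtaking optimal.

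I do not expect a real obstacle: the whole content is the structural reduction to the parameter $\lambda$ together with the elementary comparison of two geometric sequences. The only points requiring care are the bookkeeping of period indices (the ``$t-1$'' versus ``$t-2$'' shift reflects that under $b$ the state $s^*$ can already be missed at period $2$, with probability $\tfrac12$), and the remark that this claim uses only $p<q$, not the full standing hypothesis $q<\tfrac{2p}{2p+1}$; the latter is needed only for the separate expected-hitting-time comparison discussed alongside the example.
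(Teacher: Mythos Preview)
Your proof is correct and follows the same computational route as the paper: compute the non-hitting probabilities under $a$ and $b$ and compare the two geometric rates using $p<q$. Your version is in fact more complete than the paper's. The paper only verifies that $\PP_{xa}(t^*\le t)>\PP_{xb}(t^*\le t)$ for large $t$, which immediately gives ``$b$ is not overtaking optimal'' but leaves the claim ``$a$ is overtaking optimal'' implicit. You make this explicit by parametrizing \emph{every} strategy from $x$ by $\lambda\in[0,1]$ and checking that no $\lambda<1$ can overtake $\sigma_a$; this is the proper reading of overtaking optimality (not overtaken by \emph{any} strategy, mixed ones included), and your convex-combination argument handles it cleanly. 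Your closing remark that only $p<q$ is used here, and not $q<\tfrac{2p}{2p+1}$, is also correct and matches the role these hypotheses play in the paper.
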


\begin{proof}
Take a period $t\geq 2$. Then, under $a$ the probability of reaching the target state $s^*$ within the first $t$ periods is
\[\PP_{xa}(t^*\leq t)\;=\;1-(1-q)^{t-1},\]
whereas under $b$ this probability is
\[\PP_{xb}(t^*\leq t)\;=\;\frac{1}{2}+\frac{1}{2}\cdot(1-(1-p)^{t-2}).\]
Thus,
\begin{align*}
    \PP_{xa}(t^*\leq t)-\PP_{xb}(t^*\leq t) & \;=\;-(1-q)^{t-1}+\frac{1}{2}\cdot(1-p)^{t-2}\\
    &\;=\;(1-p)^{t-2}\cdot\left[-\left(\frac{1-q}{1-p}\right)^{t-2}\cdot(1-q)+\frac{1}{2}\right].
\end{align*}
Since $p<q$ by assumption, we have $(1-q)/(1-p)<1$. 
Hence, $\PP_{xa}(t^*\leq t)-\PP_{xb}(t^*\leq t)$ is positive for large $t\in\NN$. 
This completes the proof.
\end{proof}

\begin{claim}
Consider a discounted MDP which has the same state space, action spaces and transitions as the MDP of Example \ref{example: 1}, and has payoff equal to 1 in state $s^*$ and payoff 0 in all other states.

Then, for all discount factors $\beta$ close to 1, action $b$ leads to a strictly higher expected $\beta$-discounted payoff than action $a$.
\end{claim}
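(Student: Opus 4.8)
The plan is to compute the expected $\beta$-discounted payoffs under the two actions at the initial state $x$ explicitly as rational functions of $\beta$, and then compare their behaviour as $\beta\uparrow 1$. First I would record the $\beta$-discounted values of the states with forced actions. Since $s^*$ is absorbing with payoff $1$, its value is $V_\beta(s^*)=1$. The one-step Bellman equations for the forced actions $c$ and $d$, namely $V_\beta(y)=\beta\bigl(qV_\beta(s^*)+(1-q)V_\beta(y)\bigr)$ and $V_\beta(z)=\beta\bigl(pV_\beta(s^*)+(1-p)V_\beta(z)\bigr)$, give
\[V_\beta(y)=\frac{\beta q}{1-\beta(1-q)},\qquad V_\beta(z)=\frac{\beta p}{1-\beta(1-p)}.\]
Since the payoff at $x$ is $0$, playing action $a$ at $x$ yields $W_a(\beta)=\beta\bigl(qV_\beta(s^*)+(1-q)V_\beta(y)\bigr)$, which simplifies to $W_a(\beta)=\beta q/(1-\beta(1-q))$, while playing action $b$ at $x$ yields $W_b(\beta)=\beta\bigl(\tfrac12 V_\beta(s^*)+\tfrac12 V_\beta(z)\bigr)=\beta(1-\beta+2\beta p)/\bigl(2(1-\beta(1-p))\bigr)$. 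Note that both denominators stay bounded away from $0$ for $\beta$ near $1$, so $W_a$ and $W_b$ are rational functions that are smooth in a neighbourhood of $\beta=1$.

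Next I would observe that $W_a(1)=W_b(1)=1$, which is consistent with the earlier claim that both actions are optimal in the reachability problem: the target is reached with probability $1$ under either action. Hence the comparison of $W_a$ and $W_b$ for $\beta$ slightly below $1$ is governed by the first derivatives at $\beta=1$. A direct computation (conveniently carried out by substituting $\beta=1-\delta$ and expanding to first order in $\delta$, or by differentiating the two rational functions above) gives $W_a'(1)=1/q$ and $W_b'(1)=(2p+1)/(2p)$.

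Finally I would invoke the standing hypothesis $q<2p/(2p+1)$, which is exactly equivalent to $1/q>(2p+1)/(2p)$, that is, $W_a'(1)>W_b'(1)$. Since $W_a(1)=W_b(1)$, a first-order expansion gives $W_b(\beta)-W_a(\beta)=\bigl(W_b'(1)-W_a'(1)\bigr)(\beta-1)+o(\beta-1)$, and as $W_b'(1)-W_a'(1)<0$ while $\beta-1<0$, this quantity is strictly positive for all $\beta<1$ sufficiently close to $1$; this is precisely the assertion of the claim. The whole argument is routine bookkeeping; the only point deserving a word of care is the verification that the denominators do not vanish near $\beta=1$, so that the Taylor expansion (equivalently, the derivative comparison) is legitimate — and that presents no genuine obstacle.
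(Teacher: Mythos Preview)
Your argument is correct and follows essentially the same approach as the paper: both compute the two discounted payoffs explicitly (the paper via direct summation, you via Bellman equations, arriving at the same rational functions) and then compare them near $\beta=1$. The only cosmetic difference is in the final comparison---the paper simplifies $D_b(\beta)-D_a(\beta)$ to a single fraction and checks the sign of the numerator at $\beta=1$, whereas you use a first-order Taylor expansion via the derivatives $W_a'(1)=1/q$ and $W_b'(1)=(2p+1)/(2p)$; both routes reduce to the same inequality $q<2p/(2p+1)$.
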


\begin{proof}
Note that in the discounted MDP, if state $s^*$ is reached at period $t$, then the $\beta$-discounted payoff is equal to $(1-\beta)\cdot(\beta^{t-1}+\beta^t+\cdots)=\beta^{t-1}.$

Thus, action $a$ leads to the expected discounted payoff:
\[D_a(\beta)\;=\;q\beta+(1-q)q\beta^2+(1-q)^2 q\beta^3+\cdots\;=\;q\beta\frac{1}{1-(1-q)\beta}\;,\]
whereas action $b$ leads to the following expected discounted payoff:
\[D_b(\beta)\;=\;\frac{1}{2}\beta+\frac{1}{2}\cdot(p\beta^2+(1-p)p\beta^3+(1-p)^2 p\beta^4+\cdots)\;=\;\frac{1}{2}\beta+\frac{1}{2}p\beta^2\frac{1}{1-(1-p)\beta}.\]
As one can verify, we have
\[D_b(\beta)-D_a(\beta)\;=\;\frac{\frac{1}{2}\beta(1-\beta)\cdot[1-(1-2p)(1-q)\beta-2q]}{(1-(1-p)\beta)\cdot(1-(1-q)\beta)}.\]
The denominator of the fraction above is positive for all $\beta\in(0,1)$. Since $q<\frac{2p}{2p+1}$ by assumption, the expression $1-(1-2p)(1-q)-2q$ is positive. Hence, the numerator of the fraction above is positive for large $\beta\in(0,1)$. Thus, the fraction above is also positive for large $\beta\in(0,1)$, which implies the claim that $D_b(\beta)>D_a(\beta)$ for large $\beta\in(0,1)$.
\end{proof}

\begin{claim}
In Example \ref{example: 1}, the expectation of the period $t^*$ when reaching the state $s^*$ 
is smaller under $b$ than under $a$: \[\EE_{xb}(t^*)\,<\,\EE_{xa}(t^*).\]
\end{claim}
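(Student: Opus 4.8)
The plan is to compute both expectations explicitly, since under each of the two actions the first passage time $t^*$ has an elementary, essentially geometric, distribution.

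First I would analyze action $a$. Starting in $x$ and playing $a$, the state $s^*$ is reached at period $2$ with probability $q$; otherwise the play is in $y$ at period $2$, and from $y$ the unique action $c$ takes the play to $s^*$ with probability $q$ in each subsequent period. Hence $\PP_{xa}(t^*=k)=(1-q)^{k-2}q$ for every $k\geq 2$, so $t^*-1$ is geometrically distributed with parameter $q$ on $\{1,2,\dots\}$, and therefore
\[\EE_{xa}(t^*)\;=\;1+\frac{1}{q}.\]

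Next I would analyze action $b$ in the same way. Playing $b$ in $x$, the state $s^*$ is reached at period $2$ with probability $\tfrac12$; otherwise the play is in $z$ at period $2$, and from $z$ the unique action $d$ takes the play to $s^*$ with probability $p$ in each subsequent period, so conditional on being in $z$ at period $2$ we have $t^*=2+G$ with $G$ geometrically distributed with parameter $p$ on $\{1,2,\dots\}$. Taking expectations,
\[\EE_{xb}(t^*)\;=\;\frac12\cdot 2+\frac12\left(2+\frac1p\right)\;=\;2+\frac{1}{2p}.\]

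Finally I would subtract: $\EE_{xa}(t^*)-\EE_{xb}(t^*)=\frac1q-1-\frac1{2p}$, which is strictly positive exactly when $\frac1q>\frac{2p+1}{2p}$, i.e.\ when $q<\frac{2p}{2p+1}$ --- precisely the standing assumption on $p$ and $q$ in Example~\ref{example: 1}. I do not expect any genuine obstacle here; the only point requiring care is the bookkeeping of periods, namely that period $1$ is the initial state $x$ and the first randomized transition occurs between periods $1$ and $2$, so that the geometric counts are anchored at the right period.
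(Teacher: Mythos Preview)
Your proposal is correct and follows essentially the same approach as the paper: compute each expectation explicitly via the geometric structure and compare, reducing the inequality to the standing assumption $q<\tfrac{2p}{2p+1}$. The only difference is a harmless shift of one period: the paper's computation yields $\EE_{xa}(t^*)=\tfrac{1}{q}$ and $\EE_{xb}(t^*)=1+\tfrac{1}{2p}$, whereas your convention (period~1 is the initial state, first transition lands at period~2) gives values each larger by~1; the difference $\EE_{xa}(t^*)-\EE_{xb}(t^*)=\tfrac{1}{q}-1-\tfrac{1}{2p}$ is the same either way.
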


\begin{proof}
 We have
\[\EE_{xa}(t^*)\;=\;q+2(1-q)q+3q(1-q)^2+\cdots\;=\;\frac{1}{q}\]
and
\[\EE_{xb}(t^*)\;=\;\frac{1}{2}+\frac{1}{2}\cdot[2p+3(1-p)p+4p(1-p)^2+\cdots]\;=\;1+\frac{1}{2p}.\]
Since $q<\frac{2p}{2p+1}$ by assumption, the claim follows.
\end{proof}

\end{example}



\section{Reachability objectives: Deterministic MDPs}

We call an MDP with a reachability objective \textit{deterministic} 
if for each state $s\neq s^*$ and every action $a\in A(s)$
there is a state $z \in S$ such that $p(\{z,s^*\}|s,a)=1$. 
That is, for any combination of state and action, 
the play moves to the target state $s^*$
or to a specific state that is not $s^*$.

The following theorem claims that, in deterministic MDPs with a reachability objective, there always exists a pure stationary strategy that is at least as good as any pure strategy in the overtaking sense. The main idea of the proof is to transform the MDP with a reachability objective into an average payoff MDP. The payoffs that we assign to actions are related to the probabilities that these actions lead to the target state.

The condition that the MDP is deterministic plays an important role. 
Indeed, Example \ref{example each pure} will show that the result is not true in general for non-deterministic MPDs.

\begin{theorem}\label{theorem-det}
In every deterministic MDP with a reachability objective, there exists a pure stationary strategy that is not weakly overtaken by any other pure strategy.
\end{theorem}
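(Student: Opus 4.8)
The plan is to reduce the statement, via a logarithmic change of variables, to a known sporadic-overtaking-optimality result for average-payoff MDPs. First I would exploit the deterministic structure. Fix an initial state $s_1\neq s^*$ and a pure strategy $\sigma$, and write $q(s,a):=p(s^*\mid s,a)$. Because every action either moves to $s^*$ or to the uniquely determined non-target state $\mathrm{succ}(s,a)$, the histories of positive probability under $\sigma$ that have not yet reached $s^*$ form a single infinite path $s_1,a_1,s_2,a_2,\dots$, where $s_{i+1}=\mathrm{succ}(s_i,a_i)$ and $a_i$ is the action prescribed by $\sigma$ after $s_1,a_1,\dots,s_i$. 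A short telescoping computation gives
\[
\PP_{s_1\sigma}(t^*\le t)\;=\;1-\prod_{i=1}^{t-1}\bigl(1-q(s_i,a_i)\bigr)\qquad\text{for all }t.
\]
It is exactly here that determinism is used: if an action could lead to two distinct non-target states, this path would branch and the argument would collapse, in line with Example~\ref{example each pure}. Assuming momentarily that no action has $q(s,a)=1$ and taking logarithms, the comparison of $\PP_{s_1\sigma}(t^*\le t)$ with $\PP_{s_1\sigma'}(t^*\le t)$ becomes the comparison of the partial sums $\sum_{i=1}^{t-1}w(s_i,a_i)$ and $\sum_{i=1}^{t-1}w(s_i',a_i')$, where $w(s,a):=-\ln(1-q(s,a))\ge 0$; since $x\mapsto 1-e^{-x}$ is strictly increasing, this correspondence preserves both weak and strict inequalities. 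In particular, a pure strategy $\sigma'$ weakly overtakes $\sigma$ at $s_1$ in the reachability sense if and only if the $w$-partial-sums of $\sigma'$ eventually dominate those of $\sigma$ with strict inequality for infinitely many $t$. The restriction to pure competing strategies is essential for this: a mixed strategy does not collapse to a single non-target path.

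Next I would set up the auxiliary average-payoff MDP: state set $S\setminus\{s^*\}$, augmented by an absorbing sink state into which all actions with $q(s,a)=1$ are redirected and which carries a self-loop with a large constant payoff $M$; the same action sets $A(s)$; the deterministic transitions $s\mapsto\mathrm{succ}(s,a)$; and payoffs $w(s,a)$. Finding the strategy we want now amounts to finding, in this MDP, a pure stationary strategy that from every initial state is not weakly overtaken, in the partial-sum sense, by any pure strategy. I would obtain this from the results of Flesch et al.\ (2017) on sporadic overtaking optimality: there is a pure stationary strategy $\bar\sigma$, the same for all initial states, whose partial sums are never eventually dominated, with strict inequality infinitely often, by those of any pure strategy.

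Finally I would translate $\bar\sigma$ back and check that it is not weakly overtaken by any pure strategy at any initial state $s$. If no action with $q=1$ is reachable from $s$, the logarithmic dictionary above applies verbatim and we are done. If such an action is reachable from $s$, then optimality of $\bar\sigma$ in the auxiliary MDP forces it to reach the sink from $s$ (with $M$ chosen larger than every $w(s,a)$, any path reaching the sink eventually dominates every path that avoids it), so $\PP_{s\bar\sigma}(t^*\le t)=1$ for all large $t$; since no strategy can exceed probability $1$ infinitely often, $\bar\sigma$ is again not weakly overtaken from $s$.

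The main obstacle is not the reduction — which is routine once the product formula is written down — but invoking the right statement from Flesch et al.\ (2017): the weak-overtaking relation is strictly finer than comparison of average payoffs, so what is needed is their sharper result that some pure stationary strategy, optimal not merely in the average sense but also with respect to the lower-order (bias-type) terms and with the correct asymptotic behaviour on the eventually-periodic tails of competing paths, is never weakly overtaken. A secondary, purely bookkeeping obstacle is the treatment of actions with $q(s,a)=1$ (equivalently, of states from which $s^*$ can be reached with probability one in boundedly many steps): the translation of the overtaking relation is a genuine equivalence only in their absence, which is why the auxiliary MDP needs the sink and why that case has to be argued separately.
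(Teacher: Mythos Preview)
Your proposal is correct and follows essentially the same route as the paper: both reduce, via the product formula $1-\PP_{s\sigma}(t^*\le t)=\prod_{i=1}^{t-1}\bigl(1-q(s_i,a_i)\bigr)$ and a logarithm, to a deterministic average-payoff MDP with payoffs $-\log p(z\mid s,a)$, and then invoke the sporadic-overtaking result of Flesch et al.\ (2017), which the paper records (in the exact weak-overtaking form you identify as needed) as Theorem~\ref{extended spor}. The only cosmetic difference is your handling of actions with $q(s,a)=1$ via an absorbing high-payoff sink, whereas the paper disposes of them by a preprocessing step that collapses the corresponding states into $s^*$; both devices work.
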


For this purpose, we need the following theorem:

\begin{theorem}\label{extended spor}
Consider an average-payoff MDP such that\footnote{This is the standard definition of a deterministic MDP.} 
in each state $s\in S$ each action $a\in A(s)$ leads to a certain state $\omega(s,a) \in S$ with probability 1: 
$p(\omega(s,a)|s,a)=1$. Let $\sigma$ be a Blackwell optimal strategy. 
Then, there exists no pure strategy $\sigma'$ and no initial state $s \in S$
with the following properties:
\begin{itemize}
	\item there is $M\in\NN$ such that for all periods $t\geq M$ we have $u_t(s,\sigma)\,\leq\,u_t(s,\sigma')$, where $u_t(s,\sigma)$ and $u_t(s,\sigma')$ denote the expected average payoffs up to period $t$ under $\sigma$ and respectively under $\sigma'$ at initial state $s$,
	\item and $u_t(s,\sigma)\,<\,u_t(s,\sigma')$ holds for infinitely many $t$.
\end{itemize}
\end{theorem}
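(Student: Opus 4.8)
The plan is to analyze the structure of a deterministic MDP under a pure stationary strategy and a pure (possibly history-dependent) strategy, and to exploit the eventual periodicity of the play. Since the MDP is deterministic, a pure stationary strategy $\sigma$ induces from each state a lasso-shaped trajectory: a finite transient path leading into a cycle, on which the average payoff eventually equals the mean payoff $g(s)$ of that cycle. A Blackwell optimal strategy $\sigma$ is average optimal, so $g(s)$ is the optimal average payoff from $s$; moreover Blackwell optimality is a refinement carrying information about the transient behavior (the ``bias'' or potential), which I expect to be the key to ruling out \emph{weak} overtaking, not just overtaking. First I would record these facts and fix notation: for a pure strategy $\sigma'$ from $s$, write $U_t(s,\sigma')=\sum_{r=1}^t u(s_r,a_r)$ for the (deterministic) total payoff up to period $t$, so that $u_t(s,\sigma')=U_t(s,\sigma')/t$.

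Next I would reduce to a cleaner comparison. Suppose toward a contradiction that some pure $\sigma'$ and initial state $s$ satisfy the two displayed conditions. Since the state space is finite and $\sigma'$ is pure but arbitrary, I would first argue that it suffices to consider $\sigma'$ that is eventually periodic along its induced trajectory: because $u_t(s,\sigma)\le u_t(s,\sigma')$ for all large $t$ with infinitely many strict inequalities, and because $u_t(s,\sigma)\to g(s)$, we must have $\liminf_t u_t(s,\sigma')\ge g(s)$; but $g(s)$ is the optimal average payoff, so in fact $u_t(s,\sigma')\to g(s)$ and along $\sigma'$ the play must eventually settle into a cycle $C$ whose mean payoff is exactly $g(s)$ (any excursion off such an optimal cycle would strictly depress the long-run average). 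From that point on, comparing $u_t(s,\sigma)$ with $u_t(s,\sigma')$ amounts to comparing the accumulated \emph{deviations} $U_t(s,\cdot)-t\cdot g(s)$, which are eventually periodic sequences (bounded, taking finitely many values with an eventual period). The weak-overtaking hypothesis then says $U_t(s,\sigma)-t g(s)\le U_t(s,\sigma')-t g(s)$ for all large $t$, strictly for infinitely many $t$; by eventual periodicity this forces $U_t(s,\sigma)-t g(s) < U_t(s,\sigma')-t g(s)$ for all large $t$ in fact, i.e. $\sigma'$ dominates $\sigma$ by a fixed positive amount in accumulated bias, uniformly along the eventual cycles.

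The final step is to derive a contradiction with Blackwell optimality of $\sigma$. The quantity $\lim_t\big(U_t(s,\sigma)-t g(s)\big)$ (along the eventual cycle, or its Cesàro/periodic average) is precisely the bias term that governs the expansion of the $\beta$-discounted value as $\beta\to 1$: one has $(1-\beta)\sum_{r\ge 1}\beta^{r-1}u(s_r,a_r) = g(s) + (1-\beta)\cdot(\text{bias}(s,\cdot)) + o(1-\beta)$, and a strictly larger bias along $\sigma'$ translates into a strictly larger $\beta$-discounted payoff for all $\beta$ sufficiently close to $1$ — contradicting the fact that $\sigma$, being Blackwell optimal, is $\beta$-discounted optimal for all $\beta$ near $1$. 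I would make this precise by summing the geometric series over the periodic tail of each trajectory and comparing leading-order terms in $(1-\beta)$; the inequality obtained in the previous paragraph on accumulated deviations is exactly what is needed. The main obstacle I anticipate is the bookkeeping in the second step: handling a pure $\sigma'$ whose trajectory from $s$ need not be periodic until one first argues it must become periodic (using finiteness of $S$ together with the averaging), and then correctly phrasing the ``eventually periodic sequences comparison'' so that a single strict inequality infinitely often upgrades to a uniform strict gap — this is where one uses that both sequences $U_t(s,\sigma)-tg(s)$ and $U_t(s,\sigma')-tg(s)$ are ultimately periodic with a common period and bounded, so their difference is ultimately periodic and its sign pattern cannot mix $\le$ everywhere with $>$ infinitely often unless it is $>$ on a full period.
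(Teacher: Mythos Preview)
Your overall architecture --- assume $\sigma'$ weakly overtakes $\sigma$, extract a uniform positive gap in the cumulative payoffs $U_t(s,\sigma')-U_t(s,\sigma)$, then use Abel summation to show $u_\beta(s,\sigma')>u_\beta(s,\sigma)$ for $\beta$ near $1$, contradicting Blackwell optimality --- is exactly the paper's, and your final step is essentially its computation. The gap is in how you extract the uniform margin. You assert that the trajectory under $\sigma'$ ``must eventually settle into a cycle $C$,'' so that $U_t(s,\sigma')-tg(s)$ is eventually periodic. This is not justified: a pure history-dependent strategy in a deterministic finite MDP can produce a non-periodic trajectory, for instance by interleaving two distinct simple cycles through a common state, each with mean payoff exactly $g(s)$, in an aperiodic pattern. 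Your sentence ``any excursion off such an optimal cycle would strictly depress the long-run average'' only forces the play onto some union of optimal-average cycles, not onto a single periodic orbit. Without periodicity, $U_t(s,\sigma')-tg(s)$ need not take finitely many values, and your upgrade to a uniform strict gap does not go through. (Even granting periodicity, the conclusion ``strict for all large $t$'' is too strong --- the periodic difference could be $0,1,0,1,\dots$ --- though a uniform lower bound $\mu>0$ at the strict-inequality times would already suffice for the Abel step.)

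The paper obtains the uniform $\mu$ without any periodicity of $\sigma'$. Normalizing the optimal average to $0$, it uses that there are only finitely many simple loops: every return on $\sigma$'s trajectory has total payoff $0$ (Claim~1), and every return on $\sigma'$'s trajectory has total payoff either $0$ or at most a fixed $\delta<0$ (Claim~2). This quantization of loop sums is the missing idea: if the positive differences $q'_{T_n}-q_{T_n}$ at the strict-inequality times had no positive infimum, one could extract a strictly decreasing subsequence, and the $\delta$-gap in loop sums then drives the difference below $0$, contradicting the weak-overtaking hypothesis. That yields the $\mu>0$ you need without any structural assumption on $\sigma'$'s trajectory; you correctly flagged this step as the main obstacle, and the loop/$\delta$ argument is what fills it.
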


\begin{proof} (of Theorem~\ref{extended spor})
 We closely follow the proof of Theorem 1 (that [1] implies [4], page 219) in Flesch et al (2017).

Fix the initial state $s$. Without loss of generality assume that $v(s) = 0$.

\noindent A sequence $\ell=(s_1,a_1,s_2,\cdots,s_t,a_t,s_{t+1})$ is called a \emph{loop} if (1) $s_1,\ldots,s_t$ are distinct elements of $S$, (2) $s_1=s_{t+1}$, (3) $a_i\in A(s_i)$ for all $i=1,\ldots,t$, and (4) $p(s_{i+1}|s_i ,a_i)=1$ for all $i=1,\ldots,t$. For a loop $\ell=(s_1,a_1,s_2,\cdots,s_t,a_t,s_{t+1})$ let $\phi(\ell)$ denote the sum of the payoffs along $\ell$:
\[ \phi(\ell) = \sum_{n=1}^t u(s_t,a_t). \]
The number of loops is finite and therefore the following quantity $\delta$ is negative:
\begin{equation}\label{delta}
\delta = \max\{\phi(\ell):\ell\text{ is a loop and }\phi(\ell) < 0\}
\end{equation}
(the definition of $\delta$ is irrelevant if the set over which the maximum is taken is empty).

Suppose that the strategy $\sigma$ is Blackwell optimal for the initial state $s$. Let $s_{1},a_{1},s_{2},a_{2},\dots$ be the sequence of states and actions induced by the strategy $\sigma$, where $s_{1} = s$.
Denote $u_{t} = u(s_{t},a_{t})$.

Take any pure strategy $\sigma'$. We let $s_{1}',a_{1}',s_{2}',a_{2}',\dots$ be the sequence of states and actions induced by the strategy $\sigma'$, where $s_{1}' = s$,
and denote $u_{t}' = u(s_{t}',a_{t}')$.

The following two claims have been proven in Flesch et al (2017, see page 220).\medskip

\noindent\textbf{Claim 1:} Suppose that for some $k,m \in \mathbb{N}$ we have $s_{k} = s_{k+m+1}$. 
Then $u_{k} + \cdots + u_{k+m} = 0$.\medskip

\noindent\textbf{Claim 2:} Suppose that for some $k,m \in \mathbb{N}$ we have $s_{k}'=s_{k+m+1}'$. 
Then either $u_{k}' + \dots + u_{k + m}' = 0$ or $u_{k}' + \dots + u_{k + m}' \leq \delta$ 
(recall that $\delta < 0$).\medskip

Suppose by way of contradiction that there is $M \in \mathbb{N}$ such that $u_{T}(s,\sigma') \geq u_{T}(s,\sigma)$ for all $T \geq M$ and the inequality is strict for an infinite sequence $T_1<T_2<\ldots$, where $M\leq T_1$.

Let $q_{1} = 0$ and $q_{t} = u_{1} + \cdots + u_{t - 1}$ for each $t \geq 1$.
Likewise, let $q_{1}' = 0$ and $q_{t}' = u_{1}' + \cdots + u_{t - 1}'$ for each $t \geq 1$. Due to our assumption about $\sigma'$, we have $q_{t}' - q_{t} > 0$  for each $t=T_1,T_2,\ldots$.

We next argue that there exists $\mu > 0$ and such that
\begin{equation}\label{bound}
q_{t}' - q_{t} \geq \mu\text{ for each }t=T_1,T_2,\ldots.
\end{equation}
Indeed, suppose that no such $\mu > 0$ exists. 
Then, by taking a subsequence if necessary, we can assume that the sequence $\{q_{T_{n}}' - q_{T_{n}}\}_{n \in \mathbb{N}}$ is strictly decreasing. That is, $q_{T_{n}}' - q_{T_{n}} > q_{T_{n + 1}}' - q_{T_{n + 1}}$ for every $n \in \mathbb{N}$. As is shown in Flesch et al (2017, page 220), by claims 1 and 2 above, this leads to $q_{T_{n}}' - q_{T_{n}} < 0$ for all sufficiently large $n$, which is a contradiction.

For each discount factor $\beta$, let $u_\beta(s,\sigma)$ and $u_\beta(s,\sigma')$ denote the expected discounted payoffs under $\sigma$ and respectively under $\sigma'$ at initial state $s$. Then, 
\begin{align*}
\frac{u_{\beta}(s,\sigma)}{1 - \beta} &= \sum_{t = 1}^{\infty}\beta^{t - 1}u_{t}\\
&= \sum_{t = 1}^{\infty}\beta^{t - 1}(q_{t + 1} - q_{t})\\
&= \sum_{t = 2}^{\infty}\beta^{t - 2}q_{t} - \sum_{t = 1}^{\infty}\beta^{t - 1}q_{t}\\
&= \sum_{t = 2}^{\infty}(\beta^{t - 2}-\beta^{t - 1})q_{t}.
\end{align*}
Likewise,
\begin{align*}
\frac{u_{\beta}(s,\sigma')}{1 - \beta} &= \sum_{t = 2}^{\infty}(\beta^{t - 2}-\beta^{t - 1})q_{t}'.
\end{align*}
Take any $\beta \in (0,1)$ for which $\sigma$ is $\beta$-optimal for the initial state $s$. 
Then, by Eq.~\eqref{bound}, we obtain
\begin{align*}
0 &\geq \frac{u_{\beta}(s,\sigma') - u_{\beta}(s,\sigma)}{1-\beta}\\
&=\sum_{t = 2}^{\infty}(\beta^{t - 2}-\beta^{t - 1})(q_{t}' - q_{t})\\
&\geq \sum_{t = 2}^{T_1}(\beta^{t - 2}-\beta^{t - 1})(q_{t}' - q_{t}) + \sum_{k=1}^\infty\sum_{t =T_k+1}^{T_{k+1}}(\beta^{t - 2}-\beta^{t - 1})\mu\\
&=\sum_{t = 2}^{T_1}(\beta^{t - 2}-\beta^{t - 1})(q_{t}' - q_{t}) + \beta^{T_1-1}\mu.
\end{align*}
Finally, taking the limit of the last inequality as $\beta \uparrow 1$, we obtain $0 \geq \mu$, a contradiction.
\end{proof}

\begin{proof} (of Theorem~\ref{theorem-det}) 
Consider a deterministic MDP with a reachability objective, denoted by $\mathcal{M}$. We may assume without loss of generality\footnote{Indeed, in such a state $s$ it is optimal to choose such an action $a$, as it leads in one step to state $s^*$. Hence, such a state can be deleted from the MDP, and each transition to state $s$ can be rewritten as a transition directly to $s^*$.} that there is no state $s\neq s^*$ and action $a\in A(s)$ such that $p(s^*|s,a)=1$. As the MDP $\mathcal{M}$ is deterministic, this implies that, if $z\in S$ denotes the unique state for which $p(\{z,s^*\}|s,a)=1$, then $p(z|s,a)>0$.

We define an auxiliary average payoff MDP $\mathcal{M}'$ as follows:
\begin{itemize}
    \item The state space is $S'=S-\{s^*\}$.
    \item For each state $s\in S'$, the action space is the same as in the MDP $\mathcal{M}$ with reachability objective: 
    $A'(s)=A(s)$.
    \item For each state $s\in S'$ and action $a\in A(s)$, the transition and the payoff in $\mathcal{M}'$ are defined as follows: If $z\neq s^*$ denotes the unique state for which $p(\{z,s^*\}|s,a)=1$, then $p'(z|s,a)=1$ and $u'(s,a)=-\log(p(z|s,a))$.
\end{itemize}

Intuitively, the MDP $\mathcal{M}'$ represents what happens if in the original MDP $\mathcal{M}$ the decision maker is unlucky at all periods: if moving to state $s^*$ in one step has a positive probability under some $(s,a)$, then the transition occurs to the unique state $z\neq s^*$ such that $p(\{z,s^*\}|s,a)=1$. Notice that in the MDP $\mathcal{M}'$, each action in each state leads to a specific state with probability 1.

Let $\sigma$ be a pure Blackwell optimal stationary strategy in $\calM'$. 
We will show that $\sigma$ satisfies the condition of Theorem \ref{theorem-det}, that is, $\sigma$ is not weakly overtaken by any other pure strategy.

Consider any other pure strategy $\rho$ in $\calM'$. 
Since the MDP $\calM'$ is deterministic,
each of the pure strategies $\sigma$ and $\rho$ induces a specific infinite history in $\mathcal{M}'$ with probability 1. Let $(s_t,a_t)_{t\in\NN}$ denote the infinite history induced by $\sigma$ and $(z_t,b_t)_{t\in\NN}$ denote the infinite history induced by $\rho$.

In the original MDP $\mathcal{M}$, the probability under $\sigma$ that state $s^*$ is not reached within the first $t$ periods is $1-\PP_{s_1,\sigma}(t^*\leq t)$. This probability is related to the payoffs in the average payoff MDP $\mathcal{M}'$ as follows. For each period $t\geq 2$ we have
\begin{align*}
\log\left[1-\PP_{s_1,\sigma}(t^*\leq t)\right] &\;=\; \log\left[\prod_{k=1}^{t-1}p(s_{k+1} | s_k,a_k) \right] \\
&\;=\; \sum_{k=1}^{t-1}\log\left[p(s_{k+1} | s_k,a_k)\right] \\
&\;=\; -\,\sum_{k=1}^{t-1}u'(s_k,a_k)\\
&\;=\; -(t-1)\cdot u_{t-1}(s_1,\sigma).
\end{align*}
Similarly, we have
\[
\log\left[1-\PP_{s_1,\rho}(t^*\leq t)\right] \;=\; -\,\sum_{k=1}^{t-1}u'(z_k,b_k)\;=\;-(t-1)\cdot u_{t-1}(s_1,\rho).
\]

By the choice of $\sigma$ in $\mathcal{M}'$ (cf. Theorem \ref{extended spor}), one of the following holds: (i) There is $M\in\NN$ such that for all periods $t\geq M$ we have $u_t(s_1,\sigma)\,=\,u_t(s_1,\rho)$. (ii) There is a strictly increasing sequence $(t_k)_{k\in\NN}$ of periods such that for each $k\in\NN$ we have $u_{t_k}(s_1,\sigma)\,>\,u_{t_k}(s_1,\rho)$.

If (i) holds, then $\PP_{s_1,\sigma}(t^*\leq t) \,=\, \PP_{s_1,\rho}(t^*\leq t)$ for all $t\geq M+1$, and hence $\rho$ does not weakly overtake $\sigma$ in the original MDP $\mathcal{M}$.

If (ii) holds, then $\PP_{s_1,\sigma}(t^*\leq t_k) \,>\, \PP_{s_1,\rho}(t^*\leq t_k)$ for each $k\in\NN$, and hence $\rho$ does not weakly overtake $\sigma$ in the original MDP $\mathcal{M}$ in this case either.
\end{proof}

The following example demonstrates that, even if the MDP with a reachability objective is deterministic, an overtaking optimal strategy may fail to exist. In the example, each pure strategy is equally good in the overtaking sense, but each pure strategy is overtaken by any strategy that uses randomization at every period.

\begin{example}\label{example: 2}
Consider the MDP with a reachability objective
that is depicted in Figure~\ref{fig1}, 
where the notation is similar to that of Example \ref{example: 1}. The initial state is state $x$.

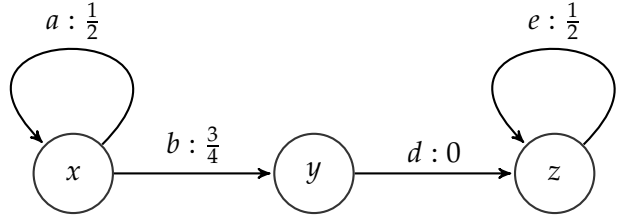
\begin{figure}[H]
\label{fig1}
\begin{center}
\begin{tikzpicture}[scale=0.8,>=stealth',shorten >=1pt,auto,node distance=2.8cm]
\node at(0,3) [state] (s1) {$x$};
\node at(4,3) [state] (s2) {$y$};
\node at(8,3) [state] (s4) {$z$};
\path[->,thick]
 (s1) edge[loop] node[above] {$a:\frac{1}{2}$} ()
 (s1) edge node[above] {$b:\frac{3}{4}$} (s2)
 (s2) edge node[above] {$d:0$}(s4)
 (s4) edge[loop] node[above] {$e:\frac{1}{2}$} ();
\end{tikzpicture}
\end{center}
\caption{The MDP in Example~\ref{fig1}.}
\end{figure}

When playing action $b$ and subsequently action $d$, 
the accumulative probability during these two periods of reaching the target state is $\frac{3}{4}$. 
Playing action $a$ twice (or action $e$ twice) 
leads to the same accumulative probability, as $\frac{1}{2}+\frac{1}{2}\cdot \frac{1}{2}=\frac{3}{4}$. 
It follows that $\PP_{x\sigma}(t^* \leq t) = 1-\tfrac{1}{2^{t-1}}$,
for all pure strategies $\sigma$, 
except of the strategy $\sigma' = a^{t-2}b$ that plays the action $a$ in the first $t-2$ periods
and the action $b$ in period $t-1$, for which $\PP_{x\sigma'}(t^* \leq t) > 1 - \tfrac{1}{2^{t-1}}$.
This will imply that strategies that use randomization at state $x$ at every period can do better
in the overtaking sense than all pure strategies.
Indeed, given any period $t\geq 2$, when calculating the probability of reaching the target state within the first $t$ periods, there is a positive probability that action $b$ is played exactly at period $t-1$ (and thus we reach the target state exactly at the last period $t$), while not having to include consequences of playing action $d$ yet.

\begin{claim} Consider Example \ref{example: 2}. For each pure strategy $\sigma$, it holds for sufficiently large periods $t$ that the probability of reaching the target state within the first $t$ periods is
\[\PP_{x\sigma}(t^*\leq t)\;=\;1-\frac{1}{2^{t-1}}.\]
In particular, no pure strategy is overtaken by another pure strategy.
\end{claim}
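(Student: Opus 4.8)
The plan is to reduce an arbitrary pure strategy to a member of a simple one-parameter family, and then to compute $\PP_{x\sigma}(t^*\le t)$ by tracking the complementary ``survival'' quantity $\PP_{x\sigma}(t^*>t)$, the probability of not having reached $s^*$ within the first $t$ periods.

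\emph{Step 1: reduce to a one-parameter family.} States $y$ and $z$ each have a single available action, and from $x$ the play can only move to $x$, to $y$, or to $s^*$, so once the play leaves $x$ it never returns. Consequently, on the histories that occur with positive probability, a pure strategy $\sigma$ does nothing but choose, at each period in which the play is still in $x$, between $a$ and $b$; and as soon as it chooses $b$ the continuation is forced. Hence $\sigma$ is characterised by a single $m\in\NN\cup\{\infty\}$, namely the first period at which, if the play is still in $x$, it plays $b$ (with $m=\infty$ meaning it always plays $a$). Writing $\sigma_m$ for the strategy ``play $a$ in $x$ at periods $1,\dots,m-1$ and $b$ at period $m$'', we get $\PP_{x\sigma}=\PP_{x\sigma_m}$, so it is enough to treat the $\sigma_m$.

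\emph{Step 2: compute the survival probabilities.} Since the relevant chain of states is essentially deterministic, $\PP_{x\sigma_m}(t^*>t)$ is just the product of the per-transition survival probabilities. For $\sigma_\infty$ every transition is an $a$-move from $x$ and survives with probability $\tfrac12$, so $\PP_{x\sigma_\infty}(t^*>t)=2^{-(t-1)}$ for all $t$. For finite $m$, transitions $1,\dots,m-1$ survive with probability $\tfrac12$ each, transition $m$ (the $b$-move from $x$) survives with probability $\tfrac14$ (the play moves to $y$), transition $m+1$ (the $d$-move from $y$) surely moves to $z$, and transitions $m+2,m+3,\dots$ (the $e$-moves from $z$) survive with probability $\tfrac12$ each. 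The key identity is $2^{-(m-1)}\cdot\tfrac14=2^{-(m+1)}$, which makes the product telescope: for every $t\ge m+2$ one gets $\PP_{x\sigma_m}(t^*>t)=2^{-(t-1)}$ exactly, while at the single period $t=m+1$ one instead gets the strictly smaller value $2^{-(m+1)}$ (this is why the strategy $\sigma'=a^{t-2}b$ mentioned before the claim does strictly better than $1-2^{-(t-1)}$ at period $t$, and is the sole exception). Therefore, for each pure strategy $\sigma$ there is $T$ — one may take $T=m+2$, or $T=1$ when $m=\infty$ — with $\PP_{x\sigma}(t^*\le t)=1-2^{-(t-1)}$ for all $t\ge T$, which is the first assertion.

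\emph{Step 3: conclude.} If $\sigma$ and $\sigma'$ are pure strategies with corresponding thresholds $T,T'$, then $\PP_{x\sigma}(t^*\le t)=1-2^{-(t-1)}=\PP_{x\sigma'}(t^*\le t)$ for all $t\ge\max\{T,T'\}$, so the strict inequality required for ``$\sigma'$ overtakes $\sigma$'' fails at all large $t$; indeed neither strategy even weakly overtakes the other. This gives the second assertion. The only step that is not routine bookkeeping is Step 1: one has to be comfortable with the claim that an arbitrary, possibly history-dependent, pure strategy collapses to some $\sigma_m$ once attention is restricted to positive-probability histories. Everything after that is an elementary product of geometric terms.
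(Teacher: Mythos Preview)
Your proof is correct and follows essentially the same approach as the paper: enumerate the pure strategies as $\sigma_\infty$ and $\sigma_m$ (the paper writes $a^\infty$ and $a^{n-1}b$), compute the survival probability as a product of per-step factors, and observe that the $\tfrac14$ from the $b$-move and the $1$ from the $d$-move combine to restore the $(1/2)^{t-1}$ pattern once $t\ge m+2$. Your Step~1 spells out the reduction to this one-parameter family more carefully than the paper does, and your threshold $t\ge m+2$ is in fact the sharp one (the paper's stated range $t\ge n+1$ is off by one, since at $t=n+1$ the strategy $a^{n-1}b$ gives $1-2^{-t}$ rather than $1-2^{-(t-1)}$), but none of this changes the argument or the conclusion.
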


\begin{proof}
For the pure strategy $a^\infty$ that plays $a$ at all periods, we have for all periods $t$ that
\[\PP_{xa^\infty}(t^*\leq t)\;=\;1-\frac{1}{2^{t-1}}.\]
For any other pure strategy $a^{n-1}b$ that plays $a$ at the first $n-1$ periods and plays $b$ at period $n$, we have for all periods $t\geq n+1$ that
\[\PP_{x,a^{n-1}b}(t^*\leq t)\;=\;1-\frac{1}{2^{n-2}}\cdot \frac{1}{4}\cdot 1\cdot \frac{1}{2^{t-n-1}}\;=\;1-\frac{1}{2^{t-1}}.\]
This completes the proof of the claim.
\end{proof}

\begin{claim}\label{laterprob} Consider Example \ref{example: 2}. Take two strategies $\sigma$ and $\sigma'$. Consider a period $t\geq 2$. Then, $\PP_{x\sigma}(t^*\leq t)>\PP_{x\sigma'}(t^*\leq t)$ holds if and only if the probability under $\sigma$ of being in state $x$ and playing action $b$ at period $t-1$ is strictly larger than that under $\sigma'$, 
i.e., $\PP_{x\sigma}(a_{t-1}=b)> \PP_{x\sigma'}(a_{t-1}=b)$.

Consequently, if the condition $\PP_{x\sigma}(a_{t-1}=b)> \PP_{x\sigma'}(a_{t-1}=b)$ 
holds for all sufficiently large periods $t$, then $\sigma$ overtakes $\sigma'$.
\end{claim}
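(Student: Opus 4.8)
The plan is to prove the following exact identity, valid for every strategy $\sigma$ and every period $t\geq 2$:
\begin{equation*}
\PP_{x\sigma}(t^*\leq t)\;=\;1-\frac{1}{2^{t-1}}+\frac14\,\PP_{x\sigma}(a_{t-1}=b).
\end{equation*}
Granting this, the claim is immediate: subtracting the identity for $\sigma'$ from that for $\sigma$, the terms $1-2^{-(t-1)}$ cancel, so $\PP_{x\sigma}(t^*\leq t)>\PP_{x\sigma'}(t^*\leq t)$ holds if and only if $\PP_{x\sigma}(a_{t-1}=b)>\PP_{x\sigma'}(a_{t-1}=b)$; and if the latter inequality holds for all sufficiently large $t$, then by definition $\sigma$ overtakes $\sigma'$.

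To establish the identity I would compute the complementary probability $\PP_{x\sigma}(t^*>t)$ by conditioning on the state occupied at period $t$. Since $s^*$ is absorbing and the initial state is $x$, on the event $\{t^*>t\}$ the play is at period $t$ in exactly one of the states $x$, $y$, $z$. Write $p_k:=\PP_{x\sigma}(a_k=b)$; since action $b$ is available only in state $x$, the event $a_k=b$ forces the play to be in $x$ at period $k$, which in turn forces action $a$ to have been chosen at every earlier period $1,\dots,k-1$ with the play staying in $x$ each time. Using this one reads off the contributions of the last two states: the play is in $y$ at period $t$ exactly when $b$ is played at period $t-1$ followed by the probability-$\tfrac14$ transition to $y$, contributing $\tfrac14 p_{t-1}$; and the play is in $z$ at period $t$ exactly when $b$ is played at some period $j\leq t-2$, followed by the forced passage $x\to y\to z$ (reaching $z$ at period $j+2$) and then $t-j-2$ consecutive rounds in which action $e$ keeps the play in $z$, contributing $\sum_{j=1}^{t-2}\tfrac14 p_j (\tfrac12)^{t-j-2}$.

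For the ``still in $x$'' term, set $q_t:=\PP_{x\sigma}(\text{play is in }x\text{ at period }t)$. Conditioning on the action at period $t$ yields the recursion $q_1=1$, $q_{t+1}=\tfrac12(q_t-p_t)$, whose solution is $q_t=2^{-(t-1)}-\sum_{j=1}^{t-1}(\tfrac12)^{t-j}p_j$. Adding the three contributions and using $\tfrac14(\tfrac12)^{t-j-2}=(\tfrac12)^{t-j}$,
\begin{align*}
\PP_{x\sigma}(t^*>t)&=q_t+\tfrac14 p_{t-1}+\sum_{j=1}^{t-2}(\tfrac12)^{t-j}p_j\\
&=(\tfrac12)^{t-1}-\sum_{j=1}^{t-1}(\tfrac12)^{t-j}p_j+\tfrac14 p_{t-1}+\sum_{j=1}^{t-2}(\tfrac12)^{t-j}p_j;
\end{align*}
the two sums differ only in the $j=t-1$ term, so the whole expression collapses to $(\tfrac12)^{t-1}-\tfrac12 p_{t-1}+\tfrac14 p_{t-1}=2^{-(t-1)}-\tfrac14 p_{t-1}$, which is the asserted identity. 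The only real work here is the bookkeeping of these geometric sums and of the forced histories through $y$ and $z$; there is no conceptual obstacle, because in this MDP the single genuine decision is whether, and at which period, to play $b$ in state $x$. As a sanity check I would also verify the base case $t=2$ directly, namely $\PP_{x\sigma}(t^*\leq 2)=\tfrac12+\tfrac14\PP_{x\sigma}(a_1=b)$, to confirm the indexing of $p_{t-1}$.
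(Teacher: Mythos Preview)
Your argument is correct and yields the sharper explicit identity
\[
\PP_{x\sigma}(t^*\leq t)=1-\tfrac{1}{2^{t-1}}+\tfrac14\,\PP_{x\sigma}(a_{t-1}=b),
\]
from which the claim follows immediately. The state-by-state bookkeeping is accurate: the recursion $q_{t+1}=\tfrac12(q_t-p_t)$ and the contributions of being in $y$ or $z$ at period $t$ are exactly as you describe, and the cancellation of the two geometric sums leaving only the $j=t-1$ term is right.

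The paper takes a different route. Rather than tracking the state occupancy probabilities, it invokes Kuhn's theorem to view $\sigma$ (restricted to horizon $t$) as a probability distribution over the finite set of pure strategies $W^t=\{a^t,b,ab,\dots,a^{t-1}b\}$. It then uses the earlier observation that every pure strategy in $W^t$ except $a^{t-2}b$ gives $\PP_{x\tau}(t^*\leq t)=1-2^{-(t-1)}$, while $a^{t-2}b$ gives $1-2^{-t}$; hence $\PP_{x\sigma}(t^*\leq t)$ is an affine increasing function of the weight $\PP_{x\sigma}(a^{t-2}b)$, and the paper identifies this weight with $\PP_{x\sigma}(a_{t-1}=b)$. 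Your approach is more elementary (no appeal to Kuhn's theorem) and delivers the exact coefficient $\tfrac14$, which the paper's argument leaves implicit; the paper's approach is shorter once the pure-strategy computations are already on the table and arguably makes the ``only one pure strategy is better'' mechanism more visible.
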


\begin{proof}
Suppose that when playing two strategies $\sigma$ and $\sigma'$, 
it holds that for some period $t\geq 2$ we have $\PP_{x\sigma}(a_{t-1}=b)> \PP_{x\sigma'}(a_{t-1}=b)$.

On the finite horizon up to period $t$, the set of pure strategies is $W^t=\{a^{t},\ b,\ ab,\ a^2b,\ldots,\ a^{t-1}b\}$. Under the pure strategy $a^{t-2}b$, the probability of reaching the target state within the first $t$ periods is
\[\PP_{x,a^{t-2}b}(t^*\leq t)\;=\;1-\frac{1}{2^{t-2}}\cdot \frac{1}{4}\;=\;1-\frac{1}{2^t}.\]
Under each other pure strategy $\tau\neq a^{t-1}b$, this probability is
\[\PP_{x\tau}(t^*\leq t)\;=\;1-\frac{1}{2^{t-1}}.\]

By Kuhn's theorem, each of the strategies $\sigma$ and $\sigma'$ can be seen as a probability distribution over the set of pure strategies $W^t$. Hence, we have
\begin{align*}
\PP_{x\sigma}(t^*\leq t) &\;=\;\sum_{\tau\in W^t}\PP_{x\sigma}(\tau)\cdot \PP_{x\tau}(t^*\leq t)\\
&\;=\;\PP_{x\sigma}(a^{t-2}b)\cdot \Big(1-\frac{1}{2^t}\Big)+\;(1-\PP_{x\sigma}(a^{t-2}b))\cdot \Big(1-\frac{1}{2^{t-1}}\Big),
\end{align*}
and similarly for the strategy $\sigma'$.

Thus, $\PP_{x\sigma}(t^*\leq t)>\PP_{x\sigma'}(t^*\leq t)$ holds if and only if $\PP_{x\sigma}(a^{t-2}b)>\PP_{x\sigma'}(a^{t-2}b)$, 
which is further equivalent with $\PP_{x\sigma}(a_{t-1}=b)> \PP_{x\sigma'}(a_{t-1}=b)$. 
The proof is complete.
\end{proof}

By Claim \ref{laterprob} above, the stationary strategy $(\frac{1}{2},\frac{1}{2})^\infty$ that always chooses action $a$ and action $b$ each with probability $\frac{1}{2}$ overtakes every pure strategy. Also, the stationary strategy $(p,1-p)^\infty$ overtakes the stationary strategy $(q,1-q)^\infty$ if $q<p<1$, as for large periods $t$ we have
\[\PP_{x,(p,1-p)^\infty}(a_{t-1}=b)\;=\;p^{t-2}(1-p)\;>\;\;q^{t-2}(1-q)\;=\; \PP_{x,(q,1-q)^\infty}(a_{t-1}=b).\]
This already shows that in Example~\ref{example: 2} there is no stationary overtaking optimal strategy. 
We now show that there is no overtaking optimal strategy at all.

\begin{claim} 
In the MDP in Example \ref{example: 2} there exists no overtaking optimal strategy.
\end{claim}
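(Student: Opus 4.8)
The plan is to use Claim~\ref{laterprob} to reduce the whole question to a statement about real sequences, and then to observe that the partial order ``overtakes'' has no maximal element because one can always add a small summable positive tail.

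\emph{Reduction.} In the MDP of Example~\ref{example: 2} the only state in which the decision maker ever has a choice is $x$: from $y$ the action $d$ is forced, from $z$ the action $e$ is forced, and $s^*$ is absorbing. Moreover, the only history ending in state $x$ at period $t$ is $(x,a,x,a,\dots,x)$, with $t-1$ copies of the pair $(x,a)$. Hence an arbitrary strategy $\sigma$ is determined, as far as the numbers $\PP_{x\sigma}(t^*\le t)$ are concerned, by the sequence $\beta_t(\sigma)\in[0,1]$, $t\in\NN$, where $\beta_t(\sigma)$ is the probability that $\sigma$ plays $b$ at that length-$t$ history. A direct computation then gives
\[
\PP_{x\sigma}(a_t=b)\;=\;\Bigl(\prod_{k=1}^{t-1}(1-\beta_k(\sigma))\Bigr)\cdot\frac{1}{2^{t-1}}\cdot\beta_t(\sigma),
\]
because being in state $x$ at period $t$ requires choosing $a$ at each of the first $t-1$ periods (probability $\prod_{k<t}(1-\beta_k(\sigma))$) and the corresponding transition returning to $x$ each time (probability $2^{-(t-1)}$), after which $b$ is played with probability $\beta_t(\sigma)$. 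Writing $h_\sigma(t):=\bigl(\prod_{k=1}^{t-1}(1-\beta_k(\sigma))\bigr)\beta_t(\sigma)$, this reads $\PP_{x\sigma}(a_t=b)=2^{-(t-1)}h_\sigma(t)$.

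\emph{Two facts.} \textbf{(A)} By Claim~\ref{laterprob}, ``$\sigma$ overtakes $\sigma'$ at $x$'' is equivalent to $\PP_{x\sigma}(a_{t-1}=b)>\PP_{x\sigma'}(a_{t-1}=b)$ for all large $t$, hence, after shifting the index and using the displayed formula, to $h_\sigma(t)>h_{\sigma'}(t)$ for all large $t$. \textbf{(B)} A sequence $h\colon\NN\to\dR$ equals $h_\sigma$ for some strategy $\sigma$ if and only if $h(t)\ge0$ for all $t$ and $\sum_{t\ge1}h(t)\le1$. The ``only if'' is the telescoping identity $\sum_{t=1}^{N}h_\sigma(t)=1-\prod_{k=1}^{N}(1-\beta_k(\sigma))$; for the ``if'' direction one sets $\beta_1=h(1)$ and $\beta_t=h(t)/\bigl(1-\sum_{k<t}h(k)\bigr)$ whenever the denominator is positive (and $\beta_t=0$ otherwise), and checks that $\beta_t\in[0,1]$ precisely because all partial sums of $h$ are at most $1$, and that $h_\sigma=h$ for the resulting strategy. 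Now, given any strategy $\sigma$, put $h:=h_\sigma$; since $\sum_{t\ge1}h(t)\le1$, choose $T$ with $\sum_{t\ge T}h(t)\le\tfrac12$, and define $\tilde h(t)=0$ for $t<T$ and $\tilde h(t)=h(t)+2^{-(t-T+2)}$ for $t\ge T$. Then $\tilde h\ge0$ and $\sum_{t\ge1}\tilde h(t)\le\tfrac12+\sum_{j\ge2}2^{-j}=1$, so by (B) there is a strategy $\tilde\sigma$ with $h_{\tilde\sigma}=\tilde h$; since $\tilde h(t)>h(t)$ for every $t\ge T$, (A) shows that $\tilde\sigma$ overtakes $\sigma$ at the initial state $x$. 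As $\sigma$ was arbitrary, no strategy is overtaking optimal, which is the claim.

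The only genuinely delicate point is the reduction itself: checking that a behavioral strategy restricted to state $x$ is nothing more than an arbitrary element $(\beta_t)_{t\in\NN}\in[0,1]^{\NN}$, that the formula for $\PP_{x\sigma}(a_t=b)$ is correct, and that (B) exhausts all realizable sequences $h_\sigma$. Once this is in place the remainder is routine bookkeeping: picking $T$ and padding $h$ by a positive summable tail is immediate, and there is no need to reprove Claim~\ref{laterprob}.
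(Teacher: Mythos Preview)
Your proof is correct, and it proceeds by a somewhat different route than the paper's. Both arguments parametrize a strategy by the sequence $(\beta_t)$ of probabilities of playing $b$ at the unique history in state $x$ at period $t$, and both invoke Claim~\ref{laterprob}. The difference lies in how the dominating strategy is produced. The paper works directly with the sequence $(\beta_t)$ and splits into two cases (either some $\beta_t=1$ or all $\beta_t=0$, in which case the stationary $(\tfrac12,\tfrac12)$ strategy overtakes; otherwise one shaves a little probability off some $\beta_m>0$ and redistributes it over all later periods while keeping $\prod_n(1-\beta_n)$ fixed). You instead change variables to $h_\sigma(t)=\bigl(\prod_{k<t}(1-\beta_k)\bigr)\beta_t$, observe via telescoping that the realizable $h$'s are exactly the nonnegative sequences with $\sum_t h(t)\le 1$, and then simply pad any given $h$ by a strictly positive summable tail. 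This buys you a case-free argument: the bijection in your fact (B) turns ``find a strategy overtaking $\sigma$'' into the trivial observation that no nonnegative sequence with sum at most $1$ is maximal under eventual strict domination. The paper's argument is slightly more hands-on but requires separately handling the degenerate cases and a more delicate balancing construction in the generic case; your reparametrization sidesteps all of that.
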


\begin{proof}
Consider any strategy $\sigma$. We construct a strategy that overtakes $\sigma$. 
The strategy $\sigma$ can be seen as a sequence $(z_n)_{n=1}^\infty$ where $z_n$ denotes the probability that $\sigma$ assigns to action $b$ when being in state $x$ at period $n$. We distinguish two cases.

Case 1: Assume that either $z_n=0$ for all periods $n$ or $z_n=1$ for some period $n$. 
In this case, $\PP_{x\sigma}(a_n=b)=0$ at large periods $n$. Hence, by Claim \ref{laterprob}, the stationary strategy $(\frac{1}{2},\frac{1}{2})^\infty$ overtakes $\sigma$.

Case 2: Assume that $z_m>0$ for some period $m$ and $z_n<1$ for all periods $n$. We can choose a sequence $(z'_n)_{n=1}^\infty$ such that (i) for all periods $n=1,\ldots,m-1$ we have $z'_n=z_n$, (ii) for period $m$ we have $z'_m<z_m$, (iii) for all periods $n>m$ we have $z_n<z'_n<1$, and (iv) $\prod_{n=1}^\infty (1-z'_n)=\prod_{n=1}^\infty (1-z_n)$. The idea is to slightly reduce the probability $z_m$ at period $m$ and slightly increase all probabilities $z_n$, $n>m$, so that (iv) holds, i.e., the total probability of ever playing $b$ under $(z_n)_{n=1}^\infty$ is equal to that under $(z'_n)_{n=1}^\infty$.

Let $\sigma'$ be the strategy corresponding to $(z'_n)_{n=1}^\infty$. Consider any period $t>m$. By (iii), we have $\prod_{n=t}^\infty (1-z'_n)\leq \prod_{n=t}^\infty (1-z_n)$. Hence, by (iv), we obtain $\prod_{n=1}^{t-1} (1-z'_n)\geq \prod_{n=1}^{t-1} (1-z_n)$. This means that the probability of being in state $x$ at period $t$ is at least as large under $\sigma'$ as under $\sigma$. 
Thus, by (iii), we obtain $\PP_{x\sigma'}(a_t=b)> \PP_{x\sigma}(a_t=b)$. Since this is true for all periods $t>m$, in view of Claim \ref{laterprob}, $\sigma'$ overtakes $\sigma$.
\end{proof}
\end{example}

\section{Reachability objectives: Non-deterministic MDPs}

The following example,
which is an adaptation of Example~\ref{example: 2} to the context of non-deterministic MDPs, 
shows that, in non-deterministic MDPs with a reachability objective, 
it can happen that each pure strategy is overtaken by another pure strategy. 
As a consequence, Theorem \ref{theorem-det} cannot be extended to non-deterministic MDPs.\\

\begin{example}\label{example each pure}
Consider the MDP with initial state $x$ and a reachability objective that is depicted in Figure~\ref{fig3}.
In this MDP, the only choice of the decision maker is when to play action $c$, 
if at all, and action $c$ can be played at most once.

\begin{figure}[H]
\label{fig3}
\begin{center}
\begin{tikzpicture}[scale=0.8,>=stealth',shorten >=1pt,auto,node distance=2.8cm]
\node at(0,3) [state] (s1) {$x$};
\node at(8,3) [state] (s2) {$y$};
\node at(4,-1) [state] (s5) {$x'$};
\node at(4,3) (s3){};
\node at(8,-1) (s6){};
\node at(12,3) [state] (s4) {$z$};
\path[->,thick]
 (s1) edge[loop] node[above] {$a:\frac{1}{2}$} ()
 (s1) edge node[above] {$c$}(s3)
 (s2) edge node[above] {$d:0$}(s4)
 (s3) edge node[above] {$(Prob.\, \frac{1}{2}):\frac{3}{4}$} (s2)
 (s4) edge[loop] node[above] {$e:\frac{1}{2}$} ()
 (s3)  edge  node[left] {$(Prob.\, \frac{1}{2}):\frac{1}{2}$} (s5)
 (s5) edge node[above] {$c'$} (s6)
 (s6) edge node[right] {$(Prob.\, \frac{1}{2}):\frac{3}{4}$} (s2)
 (s6)  edge   [bend left=40]   node[below] {$(Prob.\, \frac{1}{2}):\frac{1}{2}$} (s5);
\end{tikzpicture}
\end{center}
\caption{The MDP in Example~\ref{example each pure}.}
\end{figure}
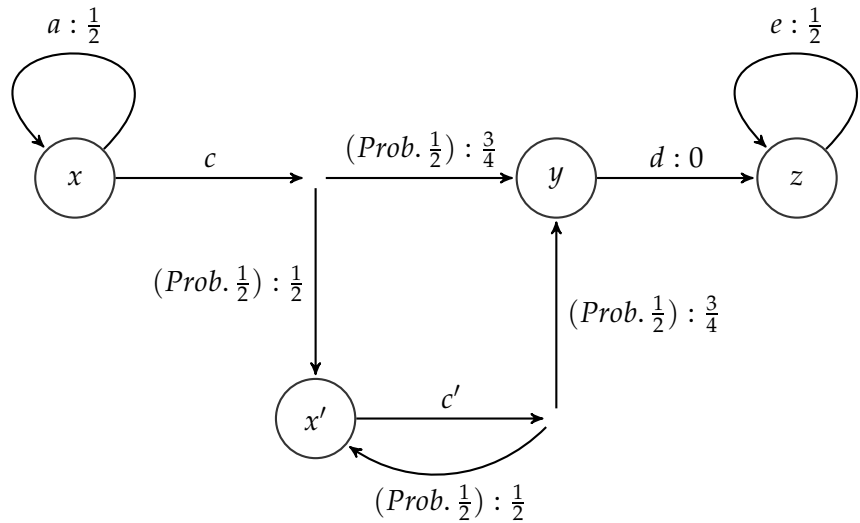

\noindent

In this MDP, action $c$ leads to the target state $s^*$ with probability $5/8$, to state $y$ with probability $1/8$ and to state $x'$ with probability $1/4$. It will be easier to think about action $c$ in the following way, which gives the same transition probabilities: After playing action $c$, a lottery is executed: (1) with probability $1/2$ the play follows the upper-part of the arrow, and thus the play moves to state $s^*$ with probability $3/4$ and to state $y$ with probability $1/8$, and (2) with probability $1/2$ the play follows the bottom-part of the arrow, and thus the play moves to state $s^*$ with probability $1/2$ and to state $x'$ with probability $1/2$. Action $c'$ in state $x'$ has a similar interpretation.

The pure strategies in this MDP are $a^\infty$, $c$, $ac$, $a^2c$, $\ldots$. The strategy $a^\infty$ corresponds to strategy $a^\infty$ in Example \ref{example: 2}, and the strategy $a^tc$ corresponds to the mixed strategy in Example \ref{example: 2} that, in state $x$, recommends action $a$ up to period $t$ and the mixed action $(\frac{1}{2},\frac{1}{2})$ at all periods after $t$. The reader can verify that the strategy $a^\infty$ is overtaken by the strategy $c$, and each strategy $a^tc$ is overtaken by the strategy $a^{t+1}c$. That is, 
each pure strategy is overtaken by another pure strategy.
\end{example}

Next, we present sufficient conditions for the existence of a strongly overtaking optimal strategy.

\begin{theorem}\label{theorem non deter}
Consider an MDP with a reachability objective. 
Suppose that, from every initial state, there is a strategy that weakly overtakes all other strategies. 
Then, there exists a stationary strategy that is strongly overtaking optimal for each initial state.
\end{theorem}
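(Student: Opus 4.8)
The idea is to extract the stationary strategy from the mixed actions of the strategies that weakly overtake all others, with a principle of optimality for such strategies doing the heavy lifting.

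\emph{Reductions and basic observations.} Make $s^*$ absorbing; this changes no $\PP_{s\sigma}(t^*\le t)$, and afterwards $\{t^*\le t\}=\{s_t=s^*\}$, so it is convenient to work with $f_\sigma(s,t):=\PP_{s\sigma}(s_t\neq s^*)$, which is non-increasing in $t$. A strategy $\tau$ weakly overtakes $\sigma$ at $s$ iff $f_\tau(s,t)\le f_\sigma(s,t)$ for all large $t$ with strict inequality infinitely often; hence a strategy that weakly overtakes all other strategies at $s$ (call it a \emph{champion} at $s$) can never itself be weakly overtaken at $s$, so it is strongly overtaking optimal, and in particular optimal, at $s$. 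By hypothesis a champion exists at every state. Any two champions at the same state must agree on $f(\cdot,t)$ for all large $t$ (each weakly overtakes, hence cannot strictly beat, the other), so there is a well-defined \emph{champion curve} $g(s,\cdot)$, eventually independent of the chosen champion, and by finiteness of $S$ the threshold beyond which this holds can be taken uniform in $s$.

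\emph{Principle of optimality.} I would first prove: if $\tau$ is a champion at $s$ and $h\in H_n$ occurs with positive probability under $\tau$ and ends in state $s'$, then the continuation $\tau|_h$ is a champion at $s'$. This is a cut-and-paste argument: if some $\rho$ weakly overtook $\tau|_h$ at $s'$, then replacing $\tau|_h$ by $\rho$ inside $\tau$ changes $f_\tau(s,t)$ by exactly $\PP_{s\tau}(h)\cdot\big(f_{\tau|_h}(s',t-n+1)-f_\rho(s',t-n+1)\big)$, a quantity that is nonnegative for large $t$ and positive for infinitely many $t$; the modified strategy would then weakly overtake the champion $\tau$ at $s$, a contradiction. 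Consequently, if for each $s$ we fix a champion $\tau_s$, then after its first step $\tau_s$ continues, from any positive-probability state $s'$, as a champion at $s'$, hence with curve $g(s',\cdot)$ eventually.

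\emph{Construction and verification.} Define the stationary strategy $\hat\sigma$ by letting $\hat\sigma(s)$ be the first-period mixed action prescribed by a champion at $s$, with the champions selected compatibly across states so that $\hat\sigma$ is optimal at every state (possible because champions use only Bellman-optimal actions and, by the principle of optimality, cannot commit to wandering forever in a value-positive region). Let $M$ be the substochastic matrix $M_{s,s'}=\sum_a\hat\sigma(s)(a)\,p(s'\,|\,s,a)$ on $S\setminus\{s^*\}$, so that $\hat\sigma$'s curve vector is $\vec f(t)=M^{t-1}\mathbf 1$. Expanding each champion $\tau_s$ one step and using the principle of optimality together with the uniform threshold, the champion-curve vector $\vec g(t)=(g(s,t))_s$ satisfies the \emph{same} recursion $\vec g(t)=M\vec g(t-1)$ for all large $t$. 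Hence for $t$ beyond a common $T_0$ we get $\vec f(t)-\vec g(t)=M^{t-T_0}\vec\delta$ with $\vec\delta=\vec f(T_0)-\vec g(T_0)\ge \vec 0$ (each $\tau_s$ weakly overtakes $\hat\sigma$). On the recurrent classes of $\hat\sigma$ inside $S\setminus\{s^*\}$ the value is $0$ by optimality, so $\vec\delta$ vanishes there and $M^{t-T_0}\vec\delta\to\vec 0$; the plan is then to conclude $\vec\delta=\vec 0$, i.e.\ $\hat\sigma$ reproduces the champion curve $g(s,\cdot)$ from every state. Once this holds, $\hat\sigma$ is itself a champion at every state, and strong overtaking optimality of $\hat\sigma$ at every initial state $s$ is immediate: any strategy weakly overtaking $\hat\sigma$ at $s$ would strictly beat the champion curve $g(s,\cdot)$ infinitely often, contradicting that $g(s,\cdot)$ lies weakly below every strategy's curve eventually.

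\emph{Main obstacle.} The delicate point is the last step of the verification: upgrading $M^{t-T_0}\vec\delta\to\vec 0$ to $M^{t-T_0}\vec\delta=\vec 0$ for all large $t$ — that is, ruling out a stationary optimal strategy whose curve converges to but stays strictly above the champion curve. This is exactly where the freedom in the mixed actions $\mu_s$ has to be used: one must argue that the champions can be chosen compatibly across states, so that the ''indifference'' among champions at a state is resolved in a way that makes the assembled stationary strategy match the champion curve exactly, not merely asymptotically. I expect that settling this compatible-selection question (and the accompanying optimality claim) is the real content of the proof; the principle of optimality and the linear-recursion comparison above are the supporting scaffolding.
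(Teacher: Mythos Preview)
Your cut-and-paste \emph{principle of optimality} is exactly the right lemma, and your definition of $\hat\sigma$ via the first-period mixed actions of champions is the same construction the paper uses. But you miss the one observation that makes the proof a two-line affair and dissolves your ``main obstacle'' entirely: \emph{the champion at each state is unique}. Your parenthetical ``each weakly overtakes, hence cannot strictly beat, the other'' is self-contradictory, because ``weakly overtakes'' already \emph{requires} strict inequality infinitely often. So if $\tau_1\neq\tau_2$ were both champions at $s$, then $\tau_1$ weakly overtakes $\tau_2$ and vice versa, giving eventually $f_{\tau_1}(s,t)=f_{\tau_2}(s,t)$ together with strict inequality in both directions infinitely often --- an outright contradiction. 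Hence the champion $\tau_s$ is unique.

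Now combine uniqueness with your principle of optimality: for any positive-probability history $h$ from $s$ ending in $z$, the continuation $\tau_s|_h$ is a champion at $z$, hence equals $\tau_z$; in particular $\tau_s(h)=\tau_z(z)=\hat\sigma(z)$. So $\tau_s$ prescribes $\hat\sigma(s(h))$ at every history $h$ it reaches with positive probability, which means $\tau_s$ and the stationary strategy $\hat\sigma$ induce the same measure from $s$. Therefore $\hat\sigma$ has the champion curve at every $s$, and anything that weakly overtook $\hat\sigma$ would weakly overtake $\tau_s$, impossible. This is precisely the paper's argument: there is no champion-curve vector $\vec g$ distinct from $\vec f$, no recursion to compare, no $\vec\delta$ to kill, and no compatible-selection issue --- uniqueness forces the selection automatically.
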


\begin{proof}
Note that if strategy $\sigma$ weakly overtakes all other strategies,
then $\sigma$ is the unique strategy with this property.

For each $s \in S$, let $\sigma_s$ denote the unique strategy that weakly overtakes all other strategies
at the initial state $s$, 
and let $\alpha_s$ denote the mixed action by $\sigma_s$ at the initial state $s$.

Let $\alpha$ be the stationary strategy that uses the mixed action $\alpha_s$ at state $s$, for all $s\in S$. 
Our goal is to show that $\alpha$ is strongly overtaking optimal for each initial state.

Fix an initial state $s \in S$. 
For each state $z$, let $H_s(z)$ denote the set of histories $h$ such that 
(1) $h$ has a positive probability under $\sigma_s$ from initial state $s$, and 
(2) $h$ ends in state $z$. We show that $\sigma_s(h)=\alpha_z$. 
Indeed, suppose by way of contradiction that $\sigma_s(h)\neq \alpha_z$. 
Let $\sigma'_s$ be the strategy such that 
(i) $\sigma'_s$ follows $\sigma_s$ outside the subgame at $h$, and 
(ii) in the subgame at $h$, the continuation strategy $\sigma[h]$ is replaced by $\sigma_z$. 
Then, for all $t\geq ||h||$
\[\PP_{\sigma'_s}(t^*\leq t)-\PP_{\sigma_s}(t^*\leq t)\,=\,\PP_{\sigma_s}(h)\cdot \left[\PP_{z,\sigma_z}(t^*\leq t)-\PP_{z,\sigma[h]}(t^*\leq t)\right].\]
Since $\sigma_z$ weakly overtakes $\sigma[h]$ for initial state $z$, 
we can conclude that $\PP_{z,\sigma_z}(t^*\leq t)-\PP_{z,\sigma[h]}(t^*\leq t)$ is non-negative for all large $t$ and strictly positive for infinitely many $t$. 
Thus, the same holds for $\PP_{\sigma'_s}(t^*\leq t)-\PP_{\sigma_s}(t^*\leq t)$, 
and hence $\sigma_s$ is weakly overtaken by $\sigma'_s$. This is a contradiction.

Hence, for each initial state $s$, each history $h$ has the same probability under the strategy $\sigma_s$ and under the stationary strategy $\alpha$. 
Using that $\sigma_s$ weakly overtakes all other strategies, $\sigma_s$ is strongly overtaking optimal, 
and hence so is $\alpha$. (In fact, $\sigma_s=\alpha$ must hold.)
\end{proof}

\section{Reachability objectives: Generic MDPs}\label{sect-gen}

In this section we consider generic MDPs with a reachability objective. The main result of this section is the following theorem.

\begin{theorem}\label{gen-thm}
In generic MDPs with a reachability objective, 
there is a pure stationary strategy that overtakes each other stationary strategy at each initial state.
\end{theorem}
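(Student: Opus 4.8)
The plan is to reduce the overtaking comparison between two stationary strategies to a comparison of the second-largest eigenvalue moduli (i.e.\ the spectral gaps) of the associated substochastic transition matrices, and then to argue that in a generic MDP a Blackwell-optimal pure stationary strategy achieves the strictly smallest such modulus, hence overtakes every other stationary strategy. Concretely, fix an initial state $s$ and a stationary strategy $\tau$. Let $Q_\tau$ denote the $(S\setminus\{s^*\})\times(S\setminus\{s^*\})$ matrix of transition probabilities among non-target states induced by $\tau$; then $1-\PP_{s\tau}(t^*\le t)$ equals the $s$-th coordinate of $Q_\tau^{\,t-1}\mathbf 1$, i.e.\ the total mass remaining off $s^*$ after $t-1$ steps. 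First I would record the asymptotics of this quantity: by the Perron--Frobenius / Jordan theory, $(Q_\tau^{\,t-1}\mathbf 1)_s \sim c\cdot t^{k}\cdot\lambda_\tau^{\,t}$, where $\lambda_\tau$ is the spectral radius of $Q_\tau$ (a nonnegative real, by Perron--Frobenius, assuming $\lambda_\tau>0$; the case $\lambda_\tau=0$ means $s^*$ is reached in finitely many steps with probability one and is easy), $k+1$ is the size of the largest Jordan block for $\lambda_\tau$, and $c>0$ provided the relevant coefficient does not vanish. Consequently, for two stationary strategies $\sigma,\tau$, the strategy $\sigma$ overtakes $\tau$ at $s$ precisely when the pair $(\lambda_\sigma, \text{block size})$ is lexicographically smaller than $(\lambda_\tau,\text{block size})$ — smaller spectral radius beats larger, and with equal spectral radius, smaller Jordan block beats larger — again up to the genericity caveat that leading coefficients are nonzero.

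Next I would invoke genericity to clean this up. Randomizing all transition probabilities by a non-trivial joint density, with probability one the following hold simultaneously for all (finitely many) pure stationary strategies $\rho$: the matrix $Q_\rho$ has a simple dominant eigenvalue $\lambda_\rho$ (so $k=0$), all these dominant eigenvalues $\{\lambda_\rho\}$ are pairwise distinct, and all leading coefficients $c$ in the expansions above are nonzero. Then I would let $\sigma^*$ be a Blackwell-optimal pure stationary strategy for the discounted reachability problem — equivalently (by the standard transformation to a regular MDP with payoff $1$ at $s^*$, mentioned in the paper) the pure stationary strategy maximizing $\PP_{s\rho}(t^*\le t)$ for all $t$ large, among pure stationary strategies. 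The key claim is that $\sigma^*$ has the strictly smallest dominant eigenvalue $\lambda_{\sigma^*}$ among all pure stationary strategies, and in fact among all stationary strategies. For pure strategies this follows because Blackwell optimality means $\PP_{s\sigma^*}(t^*\le t)\ge \PP_{s\rho}(t^*\le t)$ for all large $t$, i.e.\ $(Q_{\sigma^*}^{\,t-1}\mathbf 1)_s\le (Q_\rho^{\,t-1}\mathbf 1)_s$ eventually, which with the distinct-eigenvalue genericity forces $\lambda_{\sigma^*}<\lambda_\rho$ for every $\rho\ne\sigma^*$. To pass from pure to arbitrary stationary $\tau$, I would use that the set of stationary strategies is the product of the simplices $\Delta(A(s))$, the map $\tau\mapsto\lambda_\tau$ (spectral radius of $Q_\tau$) is continuous and, on each face, is a maximum/convexity-type function of the randomization, so its extreme values over the polytope are attained at vertices, i.e.\ at pure stationary strategies; hence $\lambda_\tau\ge\min_\rho\lambda_\rho=\lambda_{\sigma^*}$, with equality only if $\tau$ shares the dominant behaviour of $\sigma^*$ — and genericity rules out a non-pure $\tau$ matching it exactly. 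Combining, $\lambda_{\sigma^*}<\lambda_\tau$ for every stationary $\tau\ne\sigma^*$, so by the asymptotic equivalence $\PP_{s\sigma^*}(t^*\le t)>\PP_{s\tau}(t^*\le t)$ for all large $t$; since this holds at every initial state $s$, $\sigma^*$ overtakes every other stationary strategy.

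The main obstacle I anticipate is the step comparing a pure strategy to a genuinely mixed stationary strategy: showing $\lambda_{\sigma^*}\le\lambda_\tau$ for all stationary $\tau$, and that this can be made strict generically. The cleanest route is probably to avoid optimizing over the continuum altogether and instead observe directly that the value of the auxiliary discounted MDP is attained by a pure stationary strategy (Shapley/Blackwell), so for every $\beta$ close to $1$ and every stationary $\tau$ we have $\EE_{s\sigma^*}[\beta^{t^*-1}]\ge\EE_{s\tau}[\beta^{t^*-1}]$, which after the Abel-summation manipulation used in the proof of Theorem~\ref{extended spor} translates into an inequality on $\sum_t \beta^{t}\big((Q_\tau^{t}\mathbf 1)_s-(Q_{\sigma^*}^{t}\mathbf 1)_s\big)\ge 0$ for all $\beta\uparrow1$; a Tauberian/growth argument then yields $\lambda_{\sigma^*}\le\lambda_\tau$, and the genericity of the pure-strategy eigenvalues together with continuity upgrades this to the strict inequality needed. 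A secondary technical point is verifying the genericity statements themselves — that simplicity and distinctness of the dominant eigenvalues, and nonvanishing of leading coefficients, each fail only on a measure-zero set; this is standard (these are non-trivial real-analytic conditions in the transition parameters, hence either identically degenerate or generically satisfied, and one exhibits a single parameter value where they hold), and I would cite Theorem~\ref{them eigen} and the linear-algebra references for the eigenvalue facts.
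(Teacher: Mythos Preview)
Your overall framework---reducing the overtaking comparison to the spectral radii of the substochastic matrices $Q_\tau$---is exactly the paper's approach (Steps~1--2 and the Appendix). Two real gaps, however, break the argument.

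\textbf{(i) Blackwell optimality does not select the minimal spectral radius.} You take $\sigma^*$ to be Blackwell optimal for the discounted problem with payoff $1$ at $s^*$ and assert this is ``equivalently the pure stationary strategy maximizing $\PP_{s\rho}(t^*\le t)$ for all $t$ large.'' This equivalence is false, and Example~\ref{example: 1} in the paper is a direct counterexample: there action $b$ is Blackwell optimal but action $a$ has the strictly smaller spectral radius ($1-q<1-p$) and is the overtaking winner. The paper makes a point of this disconnect. Consequently your fallback Tauberian route, which is premised on Blackwell optimality, cannot deliver $\lambda_{\sigma^*}\le\lambda_\tau$ either. The paper avoids Blackwell optimality entirely here and simply picks, among the finitely many pure stationary strategies, the one with smallest $\lambda_2$ (generically unique).

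\textbf{(ii) The minimum-at-vertices step is the whole difficulty.} You write that $\tau\mapsto\lambda_\tau$ is ``a maximum/convexity-type function \ldots\ so its extreme values over the polytope are attained at vertices.'' This is precisely the step that requires work, and it is \emph{not} a convexity statement one can just quote: the Remark following Theorem~\ref{them eigen} exhibits two positive $3\times 3$ matrices (differing in two rows) whose midpoint has strictly \emph{smaller} Perron eigenvalue than either of them. So the spectral radius is not quasi-convex over the full polytope, and the vertex claim fails if you try to mix several rows at once. The paper's substance is exactly here: Theorem~\ref{them eigen} proves that for positive matrices differing in a \emph{single} row, $\lambda_1(\alpha A+(1-\alpha)B)\ge\min\{\lambda_1(A),\lambda_1(B)\}$, strictly if $\lambda_1(A)\neq\lambda_1(B)$; Step~4 then applies this inductively row by row (via the sets $\calB_r$) to pass from the pure minimizer to every mixed stationary strategy. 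You cite Theorem~\ref{them eigen} only at the end for ``eigenvalue facts,'' but it is the engine of the proof, not a side reference.

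In short: keep the spectral-radius reduction, drop the Blackwell-optimality identification, and replace the vertex hand-wave by the one-row eigenvalue inequality (Theorem~\ref{them eigen}) iterated across rows as in Step~4.
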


Let the state space be $S=\{1,\ldots,n\}$, where $n\geq 2$, and let the target state be state $s^*=n$. We can assume without loss of generality that state $n$ is absorbing. 

In this section we assume that the transition probabilities are randomized: for each state $s\in S\setminus\{n\}$ and action $a\in A(s)$, the probability distribution of the next state $(p(z|s,a))_{z\in S}$ is drawn independently from some continuous distribution on $\{(q_1,\ldots,q_n)\in (0,1)^n\,|\,q_1+\cdots+q_n=1\}$.

The idea of the proof of Theorem \ref{gen-thm} is as follows. Every stationary strategy $\sigma$ defines a transition matrix $A_\sigma$. Under the strategy $\sigma$, the rate of absorption to state $n$ is exactly $\lambda_2(A_\sigma)$, the second largest eigenvalue of $A_\sigma$. Thus, if $\lambda_2(A_\sigma) < \lambda_2(A_{\sigma'})$, then $\sigma$ overtakes $\sigma'$ 
for the reachability objective.
Let $A_1, A_2, \cdots, A_K$ be all transition matrices that are induced by pure stationary strategies.
Generically, the second largest eigenvalues of these matrices differ,
and therefore there is one of them, say, $A_1$, whose second largest eigenvalue is minimal.
The matrix $A_\sigma$ is in the convex hull of the matrices $A_1, A_2, \cdots, A_K$,
and we will prove that if $A_\sigma\neq A_1$ then $\lambda_2(A_\sigma) > \lambda_2(A_1)$.
This will imply that the pure stationary strategy that corresponds to the matrix $A_1$ overtakes each other stationary strategy at each initial state.

The proof of Theorem~\ref{gen-thm} consists of four steps.

\begin{itemize}
\item[Step 1:] Proving that the second largest eigenvalue determines the overtaking relation between stationary strategies: 
When comparing two stationary strategies $\sigma_A$ and $\sigma_B$ 
generating the respective transitions matrices $A$ and $B$, 
$\lambda_2(A)<\lambda_2(B)$ implies that $\sigma_A$ overtakes $\sigma_B$ at each initial state.

\item[Step 2:]  Proving that the second largest eigenvalue of a transition matrix $A$, corresponding to a stationary strategy, is equal to the largest eigenvalue of the $(n-1)\times (n-1)$ submatrix $A'$ that remains when from $A$ we remove the column and the row associated with the target state: $\lambda_2(A) = \lambda_1(A')$.

\item[Step 3:] Proving that for two positive square matrices $A$ and $B$ that differ only in one row, 
the largest eigenvalue of any convex combination of the two matrices cannot be lower 
than the minimum between the largest eigenvalues of the two matrices:
for every $\alpha\in(0,1)$ we have $\lambda_1(\alpha A +(1-\alpha)B)\geq \text{min}\left\{\lambda_1(A), \lambda_1(B)\right\}$, and if $\lambda(A)\neq\lambda(B)$ then the inequality is strict.

\item[Step 4:] Proving that it is sufficient to consider only matrices that differ in one row. 
This implies that the pure stationary strategy that corresponds to the transition matrix
with minimal second largest eigenvalue overtakes each other stationary strategy, at each initial state.
\end{itemize}

\bigskip

\noindent\textbf{Proof of Step 1:} The statement of Step 1 follows from the combination of Theorem~4 and Theorem~6 in  De Santis and Spizzichino (2012).\footnote{These two theorems in De Santis and Spizzichino (2012) imply for every initial state $s=1,\ldots,n-1$ that $\mathbb{P}_{s,\sigma_A}(t^*\leq t)\,\geq \,\mathbb{P}_{s,\sigma_B}(t^*\leq t)$ for large $t\in\NN$. However, their proof can be easily adapted to show that $\mathbb{P}_{s,\sigma_A}(t^*\leq t)\,> \,\mathbb{P}_{s,\sigma_B}(t^*\leq t)$ for large $t\in\NN$, so that strategy $\sigma_A$ overtakes strategy $\sigma_B$. Indeed, the inequalities (37) and (38) on page 12 do not only imply $\sum_{j=0}^\ell \widetilde{p}^{(n)}_{i,j}\,\geq \sum_{j=0}^\ell p^{(n)}_{i,j}$, but they even imply the strict inequality.}
\bigskip

\noindent\textbf{Proof of Step 2:} Let $\sigma$ be a stationary strategy with transition matrix 
\[A=\begin{bmatrix}
 x_{1,1}       & x_{1,2} & x_{1,3} & \dots & x_{1,n} \\
x_{2,1}       & x_{2,2} & x_{2,3} & \dots & x_{2,n} \\
 \vdots & \vdots & \vdots & \ddots & \vdots \\
 x_{n-1,1}       & x_{n-1,2} & x_{n-1,3} & \dots & x_{n-1,n}\\
 0 & 0 & 0 & \dots & 1
 \end{bmatrix},\]
with entry $(i,j)$ being the probability under $\sigma$ of moving to state $j$ from state $i$. Since the transition probabilities of the MDP are randomized, it holds generically that $x_{i,j}>0$ for all $i=1,\ldots,n-1$ and $j=1,\ldots,n$. The sum of entries in each row of the matrix $A$ is equal to 1, and hence the largest eigenvalue of $A$ is $1$, with eigenvector $(0,0,\ldots,0,1)$.

Consider also the submatrix of $A$ that arises when we delete the last column and the last row (which correspond to the target state):
\[A'=\begin{bmatrix}
 x_{1,1}       & x_{1,2} & x_{1,3} & \dots & x_{1,n-1} \\
x_{2,1}       & x_{2,2} & x_{2,3} & \dots & x_{2,n-1} \\
 \vdots & \vdots & \vdots & \ddots & \vdots \\
 x_{n-1,1}       & x_{n-1,2} & x_{n-1,3} & \dots & x_{n-1,n-1}
 \end{bmatrix}.\]
According to the Perron-Frobenius Theorem, the largest eigenvalue $\lambda_1(A')$ of the matrix $A'$ is a real number. As the sum of entries in each row of $A'$ is strictly less than 1, we have $\lambda_1(A')<1$.

The proof that $\lambda_1(A')$ is the second largest eigenvalue of the matrix $A$ follows from the following two observations:

(i) Any eigenvalue of $A'$ is also an eigenvalue of $A$. Indeed, let $\mu$ be an eigenvalue of $A'$ with right eigenvector $(y_1,\ldots,y_{n-1})$.\footnote{Recall that the same set of eigenvalues correspond both to right and left eigenvectors.}  Then $\mu$ is an eigenvalue of $A$ with right eigenvector $(y_1,\ldots,y_{n-1},0)$.

(ii) If $\mu\neq 1$ be an eigenvalue of $A$, then $\mu$ is also an eigenvalue of $A'$. Indeed, let $y=(y_1,\ldots,y_n)$ be a right eigenvector of $A$ corresponding to $\mu$. Then, $Ay=\mu y$. This implies $y_n=\mu\cdot y_n$, which is only possible if $y_n=0$. Hence, $\mu$ is an eigenvalue of $A'$ with eigenvector $(y_1,\ldots,y_{n-1})$.\bigskip

\noindent\textbf{Proof of Step 3:} The statement of Step 3 follows from the following theorem.

\begin{theorem}\label{them eigen}
Let $A$ and $B$ be two positive (all elements are positive) square matrices that differ only in the first row.
For every $\alpha \in (0,1)$ define $M_\alpha := \alpha A + (1-\alpha)B$.
We will prove that $\lambda_{1}(M_{\alpha})\geq \text{min}\left\{\lambda_1(A),\lambda_1(B)\right\}$,
and, if $\lambda_1(A)\neq\lambda_1(B)$, then $\lambda_{1}(M_{\alpha})> \text{min}\left\{\lambda_1(A),\lambda_1(B)\right\}$. 

\end{theorem}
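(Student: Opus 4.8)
The plan is to use the Collatz--Wielandt characterization of the Perron--Frobenius eigenvalue. Since $A$ and $B$ are positive matrices, so is $M_\alpha$ for every $\alpha\in(0,1)$, and hence each of them has a positive Perron eigenvalue with a strictly positive eigenvector. Recall that for a positive matrix $C$ one has
\[
\lambda_1(C)\;=\;\max_{x>0}\;\min_{i}\;\frac{(Cx)_i}{x_i}.
\]
Let $x^A>0$ be the Perron eigenvector of $A$ and $x^B>0$ the Perron eigenvector of $B$, and without loss of generality assume $\lambda_1(A)\le\lambda_1(B)$, so that $\min\{\lambda_1(A),\lambda_1(B)\}=\lambda_1(A)$. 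The key observation is that $A$ and $B$ differ only in the first row, so for every vector $x$ and every index $i\ge 2$ we have $(Ax)_i=(Bx)_i=(M_\alpha x)_i$. Only the first coordinate is affected: $(M_\alpha x)_1=\alpha(Ax)_1+(1-\alpha)(Bx)_1$.

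First I would plug the eigenvector $x^A$ into the Collatz--Wielandt formula for $M_\alpha$. For $i\ge2$ we get $\frac{(M_\alpha x^A)_i}{x^A_i}=\frac{(Ax^A)_i}{x^A_i}=\lambda_1(A)$. For $i=1$ we get $\frac{(M_\alpha x^A)_1}{x^A_1}=\alpha\lambda_1(A)+(1-\alpha)\frac{(Bx^A)_1}{x^A_1}$. Now the crucial point: I claim $\frac{(Bx^A)_1}{x^A_1}\ge\lambda_1(A)$. This is where I must do a little work — the natural route is to observe that $x^A$ is a Collatz--Wielandt test vector for $B$ as well; since $B$'s rows $2,\dots,n$ agree with $A$'s, we have $\min_{i\ge2}\frac{(Bx^A)_i}{x^A_i}=\lambda_1(A)$, and because $\lambda_1(B)=\max_{x>0}\min_i\frac{(Bx)_i}{x_i}\ge\min_i\frac{(Bx^A)_i}{x^A_i}$, if $\frac{(Bx^A)_1}{x^A_1}$ were strictly less than $\lambda_1(A)$ then $\min_i\frac{(Bx^A)_i}{x^A_i}<\lambda_1(A)\le\lambda_1(B)$, which is consistent and does not immediately give a contradiction. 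So I instead argue directly: the dual (min-max) Collatz--Wielandt formula $\lambda_1(B)=\min_{x>0}\max_i\frac{(Bx)_i}{x_i}$ is not quite what I want either. The cleanest fix is to use the \emph{left} Perron eigenvector $w^B>0$ of $B$: then $w^B B=\lambda_1(B)w^B$, and testing $w^B$ against $A$ — again rows $2,\dots,n$ of $A^{T}$ and $B^{T}$ agree — together with $\lambda_1(A)\le\lambda_1(B)$ pins down the sign of $(w^B A)_1-\lambda_1(B)w^B_1$, which controls the first-row discrepancy. I expect this to be the main obstacle: getting the right one-sided bound on the single entry where $A$ and $B$ differ, so that feeding an appropriate eigenvector into Collatz--Wielandt yields $\min_i\frac{(M_\alpha x)_i}{x_i}\ge\lambda_1(A)$, hence $\lambda_1(M_\alpha)\ge\lambda_1(A)=\min\{\lambda_1(A),\lambda_1(B)\}$.

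For the strict inequality when $\lambda_1(A)\neq\lambda_1(B)$, i.e.\ $\lambda_1(A)<\lambda_1(B)$, I would strengthen the above: with the test vector $x^A$, the coordinates $i\ge2$ give exactly $\lambda_1(A)$, while the first coordinate gives $\alpha\lambda_1(A)+(1-\alpha)\frac{(Bx^A)_1}{x^A_1}$. If $\frac{(Bx^A)_1}{x^A_1}>\lambda_1(A)$ strictly, then I would perturb $x^A$ slightly — increase its first coordinate by a small $\varepsilon$ — which raises $\min_{i\ge2}\frac{(M_\alpha x)_i}{x_i}$ above $\lambda_1(A)$ (a short continuity/monotonicity computation, using positivity of $M_\alpha$ so that all entries of $M_\alpha x^A$ strictly increase in the first coordinate of $x$) while keeping the first coordinate ratio above $\lambda_1(A)$; this produces a test vector whose minimum ratio strictly exceeds $\lambda_1(A)$, giving $\lambda_1(M_\alpha)>\lambda_1(A)$. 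The remaining case $\frac{(Bx^A)_1}{x^A_1}=\lambda_1(A)$ would force $Ax^A$ and $Bx^A$ to coincide, and since $A,B$ differ only in row $1$ this can be handled by noting $x^A$ would then be a positive eigenvector of $B$ too, forcing $\lambda_1(B)=\lambda_1(A)$, contradicting $\lambda_1(A)<\lambda_1(B)$. Assembling these pieces gives both assertions of the theorem.
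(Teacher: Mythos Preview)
Your approach via Collatz--Wielandt with the Perron eigenvector $x^A$ of $A$ as test vector is different from both proofs in the paper, and it does work---but the step you flag as ``the main obstacle'' really is the crux, and you have not closed it. Your hint toward the fix also contains an error: since $A$ and $B$ differ only in row $1$, it is \emph{column} $1$ of $A^T$ and $B^T$ that differs (equivalently, every row of $A^T$ differs from the corresponding row of $B^T$ in its first entry), so the claim that ``rows $2,\dots,n$ of $A^{T}$ and $B^{T}$ agree'' is false, and the left-eigenvector argument as you sketch it does not run.

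The missing inequality $(Bx^A)_1\ge\lambda_1(A)\,x^A_1$ is nevertheless true. Suppose $(Bx^A)_1<\lambda_1(A)\,x^A_1$; together with $(Bx^A)_i=\lambda_1(A)\,x^A_i$ for $i\ge 2$ this gives $Bx^A\le\lambda_1(A)\,x^A$ componentwise with one strict coordinate. Pairing with the left Perron eigenvector $w>0$ of $B$ yields
\[
\lambda_1(B)\,w^Tx^A \;=\; w^TBx^A \;<\; \lambda_1(A)\,w^Tx^A,
\]
hence $\lambda_1(B)<\lambda_1(A)$, contradicting your assumption $\lambda_1(A)\le\lambda_1(B)$. (This is precisely the strict form of the min--max formula you set aside.) The same computation, read with $\lambda_1(A)<\lambda_1(B)$, forces the inequality to be strict, which is exactly what your perturbation argument for the second assertion needs; once this lemma is in place the remainder of your proof---including the handling of the equality case via uniqueness of the positive eigenvector---is correct.

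For comparison: the paper's first proof starts from the Perron eigenvector $u$ of $M_\alpha$ and pairs it with the \emph{left} Perron eigenvectors of $A$ and $B$, obtaining in one stroke the two-sided bound $\min\{\lambda_1(A),\lambda_1(B)\}\le\lambda_1(M_\alpha)\le\max\{\lambda_1(A),\lambda_1(B)\}$. The paper's second proof shows more, namely that $\alpha\mapsto\lambda_1(M_\alpha)$ is monotone, via the observation that $x\mapsto\det(\lambda I-A_x)$ is linear in the varying row. Your route, once completed, is comparably short to the first proof; it yields only the lower bound, but has the minor advantage that the test vector $x^A$ does not depend on $\alpha$.
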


\begin{remark}
If the two matrices $A$ and $B$ differ in several rows, 
then Theorem~\ref{them eigen} is no longer true. To see that, consider the following matrices:
\[A= \begin{bmatrix}
 \tfrac{98}{300} & \tfrac{98}{300} &\tfrac{1}{300} \\
\tfrac{98}{300}  & \tfrac{1}{300} &\tfrac{1}{300} \\
\tfrac{1}{300} &\tfrac{1}{300} & \tfrac{1}{300}
 \end{bmatrix},\hspace{0.6cm} \ \ B= \begin{bmatrix}
 \tfrac{1}{300} & \tfrac{1}{300} &\tfrac{1}{300}  \\
\tfrac{1}{300} & \tfrac{1}{300} &\tfrac{98}{300} \\
\tfrac{1}{300} &\tfrac{98}{300} &\tfrac{98}{300}
 \end{bmatrix}.\]
\noindent 
For these matrices, $\lambda_1(A)=\lambda_1(B)=0.529522$, 
while $\lambda_1\left(\tfrac{1}{2}A + \tfrac{1}{2}B\right)=\tfrac{1}{3} < 0.529522$.
\end{remark}

We present two proofs for Theorem~\ref{them eigen}.%
\footnote{We thank Robert Israel for suggesting the first proof, 
and Omri Solan for suggesting the second proof.}

\begin{proof}[First Proof] 
Denote $\lambda^*=\lambda_1(M_{\alpha})$.

By the Perron-Frobenius theorem, there exists a positive right eigenvector $u = (u_j)_j$ corresponding to $\lambda^*$ for the matrix $M_{\alpha}$.
Let $e_j$ be the $j$'th unit vector.
Note that $u_j$ is the $j$'th coordinate of $u$, while $e_j$ is the $j$'th unit vector.
For every $j \neq 1$ we have
$e_j^T A  = e_j^T B$, hence $e_j^T A u = e_j^T B u$.
Since
\[\lambda^* u_j = \lambda^* e_j^T u = e_j^T M_{\alpha} u = \alpha e_j^T A u +  (1-\alpha)e_j^T B u, \]
we deduce that
\begin{equation}
\label{equ:10}
e_j^T A u = e_j^T B u = \lambda^* u_j, \ \ \ \forall j \neq 1.
\end{equation}
For $j=1$ we have
\[\lambda^* u_1 = \lambda^* e_1^T u = e_1^T M_{\alpha} u = \alpha e_1^T A u +  (1-\alpha)e_1^T B u. \]
Assume w.l.o.g. that $e_1^T A u \geq e_1^T B u$, so that

\begin{equation}
\label{equ: 10b}
 e_1^T A u \geq \lambda^* u_1 \geq e_1^T B u. \end{equation}
It follows from Eqs.~\eqref{equ:10} and~\eqref{equ: 10b} that for every nonnegative vector $v$ we have
\begin{equation}
\label{equ:11}
v^T A u \geq \lambda^* v^T u \geq v^T B u.
\end{equation}
By the Perron-Frobenius theorem, there exists a positive right eigenvector $v_A$ for $\lambda_1(A)$,
and a positive right eigenvector $v_B$ for $\lambda_1(B)$.
Substituting $v=v_A$ in the left-hand side of Ineq.~\eqref{equ:11} and
substituting $v=v_B$ in the right-hand side of Ineq.~\eqref{equ:11} we obtain
\begin{equation}
\label{equ:12}
\lambda_1(A) v_A^T u = v_A^T A u \geq \lambda^* v^T_A u,
\end{equation}
and
\begin{equation}
\label{equ:13}
\lambda_1(B) v_B^T u = v_B^T B u \leq \lambda^* v^T_B u.
\end{equation}
Since $v^T_A u$ and $v^T_B u$ are positive reals, this implies that $\lambda_1(A) \geq \lambda^* \geq \lambda_1(B)$.

Suppose now that $\lambda_1(A) > \lambda_1(B)$.
Then necessarily $e_1^T A u > e_1^T B u$, since, if the two are equal, then there would have been equality in Eq.~\eqref{equ:11},
and then we would obtain that $\lambda_1(A) = \lambda_1(B)$.
As all coordinates of $v_A$ and $v_B$ are positive,
and in particular the first coordinates are positive,
there is a strict inequality in Eqs.~\eqref{equ:12} and~\eqref{equ:13}.
\end{proof}

\begin{proof}[Second Proof]
Assume to the contrary that the function $\alpha \mapsto \lambda_1(M_\alpha)$ is not monotone.
Then there exist $0 < \alpha_1 < \alpha_2 < \alpha_3 < \alpha_4 < 1$ such that 
$\lambda_1(M_{\alpha_2}) < \lambda_1(M_{\alpha_1}) = \lambda_1(M_{\alpha_3}) < \lambda_1(M_{\alpha_4})$.
Set $\lambda = \lambda_1(M_{\alpha_1}) = \lambda_1(M_{\alpha_3})$.

For every $x \in \dR^n_{++}$ denote by $A_x$ the matrix that coincides with $A$, 
except that its first row is $x$.
Denote by $\calM_A := \{A_x \colon x \in \dR^n_{++}\}$ the space of all these matrices.
The space $\calM_A$ is equivalent to $\dR^n_{++}$.

By the Perron-Frobenius Theorem the largest eigenvalue of a positive matrix is a positive real.
Denote by $f : \dR^n_{++} \to \dR$ the function that assigns to each $x \in \dR^n_{++}$ 
the largest eigenvalue of the matrix $A_x$, that is, $f(x)=\lambda_1(A_x)$.
Since the function that maps each $x$ to the norm of the largest eigenvalue is continuous,
the function $f$ is continuous.

Denote by $H_\lambda = \{ x \colon \det(\lambda I - A_x) = 0\} \subseteq \dR^n_{++}$ 
the set of all matrices that have $\lambda$ as an eigenvalue.
The function $x \mapsto \det(\lambda I - A_x)$ is linear, 
hence the set $H_\lambda$ is the intersection of a hyperspace with $\dR^n_{++}$.
Plainly, $\{ x \colon f(x) = \lambda\} \subseteq H_\lambda$.
We will show that $\{ x \colon f(x) = \lambda\} = H_\lambda$. 

On the one hand, the complement of the set  $\{ x \colon f(x) = \lambda\}$ 
is the union of the two sets $\{ x \colon f(x) > \lambda\}$ and $\{ x \colon f(x) < \lambda\}$.
Note that these two sets are non-empty, because 
$\lambda_1(M_{\alpha_2}) < \lambda$ and $\lambda_1(M_{\alpha_4}) > \lambda$.
The continuity of $f$ implies that the set $\{x \colon f(x) \neq \lambda\}$ is disconnected.

On the other hand, the complement of the set $\{ x \colon f(x) = \lambda\}$ 
is the union of the 
two half-spaces $\{ x \colon \det(\lambda I - A_x) > 0\}$ and $\{ x \colon \det(\lambda I - A_x) < 0\}$,
and of the set $H_\lambda \setminus \{ x \colon f(x) = \lambda\}$.
If the latter set were nonempty, then the union of the three sets would have been connected,
a contradiction.

It follows that $\{ x \colon f(x) = \lambda\} = H_\lambda$.
This in particular implies that 
$\lambda_1(M_{\alpha_1}) = \lambda_1(M_{\alpha_2}) = \lambda_1(M_{\alpha_3}) = \lambda_1(M_{\alpha_4})$,
which contradicts the choice of $\alpha_1, \alpha_2, \alpha_3, \alpha_4$.
Consequently, if $\lambda_1(A) = \lambda_1(B)$
then the function $\alpha \mapsto \lambda_1(M_\alpha)$ is constant,
while if $\lambda_1(A) \neq \lambda_1(B)$ then the function $\alpha \mapsto \lambda_1(M_\alpha)$
is strictly monotone. Observe that this result is slightly stronger than the statement of the theorem.
\end{proof}

\bigskip

\noindent\textbf{Proof of Step 4:} Let $A_1, A_2, \cdots, A_K$ be all transition matrices that are induced by pure stationary strategies. Generically, the second largest eigenvalues of these matrices are all different. Assume that $\lambda_2(A_1)<\lambda_2(A_i)$ for all $i=2,\ldots,n$. Let $\sigma$ be the pure stationary strategy corresponding to the transition matrix $A_1$.

Let $\tau$ be a stationary strategy, and let $A$ be the transition matrix corresponding to $\tau$. For every $r=0,1,\ldots,n-1$ let $\calB_r$ be the collection of all matrices that
coincide with $A$ in the first $r$ rows, 
and coincide with one of the matrices $A_1, A_2, \cdots, A_K$ in the other $n-r$ rows.
Note that $\calB_0 = \{A_1, A_2, \cdots, A_K\}$. Using Step 2 together with Step 3 inductively we obtain that
\begin{equation}
\label{equ:89}
 \lambda_2(A) \geq \min_{B \in \calB_{n-1}} \lambda_2(B) \geq 
\min_{B \in \calB_{n-2}} \lambda_2(B) \geq \cdots \geq \min_{B \in \calB_{0}} \lambda_2(B). 
\end{equation}
Moreover, if $A \notin \calB_{0}$, then at least one of the inequalities in Eq.~\eqref{equ:89} is strict. 

Now assume that $\tau\neq \sigma$. If $A \notin \calB_{0}$ then $\lambda_2(A)>\min_{B \in \calB_{0}} \lambda_2(B)=\lambda_2(A_1)$, whereas if $A \in \calB_{0}$ then $\lambda_2(A)>\lambda_2(A_1)$ by the choice of $A_1$. Thus, by Step 1, $\sigma$ overtakes $\tau$ at each initial state.\hfill $\Box$

\begin{remark} Consider a generic MDP with a reachability objective. According to Theorem \ref{gen-thm}, there is a pure stationary strategy $\sigma$ that overtakes each other stationary strategy at each initial state. Given any stationary strategy $\sigma'\neq \sigma$ and initial state $s$, one can compute a horizon $T$ such that $\sigma$ outperforms $\sigma'$ beyond $T$: for all $t\geq T$ we have $\PP_{s\sigma}(t^*\leq t)> \PP_{s\sigma'}(t^*\leq t)$. For the details we refer to the Appendix. 
\end{remark}

\section{Safety objectives}

In this section we consider MDPs with a safety objective:
the decision maker's objective is to avoid the state $s^*$ with a probability as high as possible.\medskip

\noindent\textbf{The statement of Theorem~\ref{theorem-det}:} 
The statement remains valid for safety objectives. The proof requires the following changes:

(1) For the MDP $\mathcal{M}$, 
we can still assume without loss of generality that there is no state $s\neq s^*$ 
and action $a\in A(s)$ such that $p(s^*|s,a)=1$, but the reason is different. 
Such an action is the worst for the decision maker, so it can be deleted. If all actions in a state $s$ would be deleted, then we can delete the state $s$ and replace each transition to state $s$ by a transition directly to state $s^*$.

(2) The payoffs in the auxiliary MDP $\mathcal{M}'$ is defined to be the opposite
of the payoff in the proof of Theorem~\ref{theorem-det}: 
if $z\neq s^*$ denotes the unique state for which $p(\{z,s^*\}|s,a)=1$, 
then the payoff is $u'(s,a)=\log(p(z|s,a))$. 
The reason is that, now with a safety objective, the decision maker prefers that $p(z|s,a)$ is large.\medskip

\noindent\textbf{The statements  of Theorem~\ref{theorem non deter} and Theorem~\ref{gen-thm}:} The statements remain valid for safety objectives, with proofs that are almost identical to the ones for reachability objectives.\medskip

\noindent\textbf{The counter-part of Example \ref{example: 2}:} 
Also for safety objectives, we can construct an MDP in which the decision maker has no 
overtaking optimal strategy. Indeed, take the MDP in Example \ref{example: 2}, 
and replace $b:\frac{3}{4}$ with $b:0$ and replace $d:0$ with $d:\frac{3}{4}$. 
That is, now action $b$ leads to state $s^*$ with probability 0 and action $d$ leads to state $s^*$ 
with probability $\frac{3}{4}$. 
In this MDP with the safety objective, by comparing the transition probabilities, action $b$ is still to be preferred over action $d$. It can be proven with similar arguments that this MDP admits no overtaking optimal strategy.

\section{Concluding remarks}

It remains an open problem if generic MDPs with a reachability objective admit a pure stationary 
strategy that is strongly overtaking optimal. 
To prove Theorem \ref{gen-thm}, 
we relied on techniques from linear algebra. 
When one allows non-stationary strategies, 
the transition probabilities may change period by period, 
perhaps even depending on the past play. 
Thus, in general, the transition probabilities can no longer be described by a single transition matrix, 
which complicates the analysis.

We investigated MDPs with reachability and safety objectives. 
Our solution concept of overtaking optimality for these MDPs may pave the way to define an 
overtaking solution concept for two-player zero-sum perfect information games. 
In these games, each state is controlled by one of the players. 
We can assume that Player~1's goal is to reach a certain state as quickly as possible, 
and Player~2's goal is to reach that state as slowly as possible. 
One possible solution concept for these games would be a pair of pure stationary strategies such that, given the opponent's strategy, each player's strategy is (strongly) overtaking optimal. The right notion and its existence require further investigation.

When the decision maker uses a stationary strategy, the transition probabilities are described by a transition matrix. In several situations, it can be important to study the probability distribution of the current state, at any period $t$, on condition that the state $s^*$ has not been reached yet. This conditional distribution can be expressed with the help of the transition matrix, and it converges under some conditions to a limit, called a quasi-stationary distribution. This convergence and its speed are both subject of study in the literature, see for example Diaconis and Miclo (2015).

\section*{References}
\noindent Baier, C., and Katoen, J. P. (2008): \textit{Principles of model checking,} MIT press.\medskip

\noindent Blackwell D (1962): Discrete dynamic programming. \emph{Annals of Mathematical Statistics} 33: 719-726.\medskip

\noindent Brihaye T, Bruy\`{e}re V, De Pril J (2014): On equilibria in quantitative games with reachability/safety objectives. \emph{Theory of Computing Systems} 54, 150-189.\medskip

\noindent Bruy\`{e}re, V. (2017). Computer aided synthesis: A game-theoretic approach. In Charlier, E., Leroy, J., and Rigo, M. (eds), \textit{Developments in Language Theory}, Lecture Notes in Computer Science, 10396, Springer, pp. 3--35.\medskip

\noindent Carlson DA, Haurie A, and  Leizarowitz A (1991): \textit{Infinite Horizon Optimal Control}. Berlin, Springer-Verlag.\medskip

\noindent Chatterjee K, Henzinger TA. (2012): A survey of stochastic $\omega$-regular games. \emph{Journal of Computer and System Sciences} 78: 394-413.\medskip

\noindent De Santis, E. and Spizzichino, F. (2012): 
Stochastic comparisons between hitting times for Markov Chains and words' occurrences.
Arxiv 1210.1116.
\medskip

\noindent Diaconis, P. and Laurent, M. (2015): On quantitative convergence to quasi-stationarity. \emph{Annales de la Facult\'{e} des Sciences de Toulouse: Math\'{e}matiques} 24, 973-1016.\medskip

\noindent Flesch J, Predtetchinski A, Solan E (2017): Sporadic overtaking optimality in Markov decision problems. \emph{Dynamic Games and Applications} 7, 212-228. \medskip

\noindent Guo X, Hern\'{a}ndez-Lerma O (2009): \textit{Continuous-Time Markov Decision Processes}. Springer-Verlag, Berlin.\medskip

\noindent Howard RA (1960): \emph{Dynamic Programming and Markov Processes}. MIT Press, Cambridge. \medskip

\noindent Leizarowitz, A (1996): Overtaking and almost-sure optimality for infinite horizon Markov decision processes. \emph{Mathematics of Operation Research} 21, 158--181.\medskip

\noindent M\'{e}der Z, Flesch J, Peeters R (2012): Optimal choice for finite and infinite horizons. \emph{Operation Research Letters} 40, 469-474.\medskip

\noindent Nowak AS, Vega-Amaya O (1999): A counterexample on overtaking optimality. \textit{Mathematical Methods of Operations Research} 49, 435--439.\medskip

\noindent Puterman ML (1994): \emph{Markov Decision Processes: Discrete Stochastic Dynamic Programming}. Wiley, New York.\medskip

\noindent Randour, M., Raskin, J. F., and Sankur, O. (2015): Variations on the stochastic shortest path problem. \textit{International Workshop on Verification, Model Checking, and Abstract Interpretation} (pp. 1-18).Springer, Berlin, Heidelberg. \medskip

\noindent Randour, M., Raskin, J. F., and Sankur, O. (2017): Percentile queries in multi-dimensional Markov decision processes. \textit{Formal methods in system design,}  50(2-3), 207-248. \medskip

\noindent Shapley LS (1953): Stochastic games. \textit{Proceeding of the National Academy of Sciences} 39: 1095-1100.\medskip

\noindent Stern, LE (1984): Criteria of optimality in the infinite-time
optimal control problem. \textit{Journal of Optimization Theory and Appliations} 44, 497--508.\medskip

\noindent Zaslavski AJ (2006): \textit{Turnpike Properties in the Calculus of Variations and Optimal Control}. Springer, New York.\medskip

\noindent Zaslavski AJ (2014): \textit{Turnpike Phenomenon and Infinite Horizon Optimal Control}. Springer Optimization and its Applications, New York.

\section{Appendix: Computing the horizon $T$ from which the optimal strategy in the generic case overtakes other stationary strategies}

As in Section \ref{sect-gen}, suppose that the target state $s^*$ is absorbing and that for all states $s,z\in S\setminus\{s^*\}$ and action $a\in A(s)$, the transition probability $p(z|s,a)$ is strictly positive.

Let $A_\sigma$ be the transition matrix corresponding to a stationary strategy $\sigma$. Take arbitrary states $s,z\in S\setminus\{s^*\}$. We will make use of inequalities (37) and (38) in De Santis and Spizzichino (2012) to bound the probability $A^t_{\sigma}(s,z)$ of moving from state $s$ to state $z$ in $t$ steps, under the strategy $\sigma$. 

Inequality (37) bounds $A^t_{\sigma}(s,z)$ from above. This bound uses the second highest eigenvalue of the transition matrix, that we refer to as $\lambda_2(A_\sigma)$ and states that: 
 there exists $c>0$ such that for all $t\in \NN$ we have $A^t_{\sigma}(s,z)\leq c\cdot t^{|S|} (\lambda_2(A))^t$. The constant $c$ is obtained from the Jordan form representation of the matrix $A_\sigma$. 
 
Inequality (38) obtains a lower bound for $A^t_{\sigma}(s,z)$: there exists $\widetilde{c}>0$ and $m\in\NN$ such that for all $t\geq m$ we have $A^t_{\sigma}(s,z)\geq \widetilde{c}\cdot (\lambda_2(A))^t$. The constant $\tilde{c}$ and the bound $m$ are also obtained from the Jordan form representation of the matrix $A_\sigma$.

Now take two stationary strategies $\sigma$ and $\sigma'$, and assume that $\lambda_2(A_\sigma) < \lambda_2(A_{\sigma'})$. Let $s\in S\setminus\{s^*\}$ be the initial state. Given the two inequalities above, we can find $T\in\NN$ such that $\sigma$ outperforms $\sigma'$ beyond $T$: for all $t> T$, we have $\PP_{\sigma}(t^*\leq t)> \PP_{\sigma'}(t^*\leq t)$. Indeed, choose $T\geq m$ which satisfies  $T^{|S|}\left(\tfrac{\lambda_2(A_{\sigma})}{\lambda_2(A_{\sigma '})}\right)^T <\tfrac{\tilde{c}}{c} $. Then, for all periods $t\geq T$ and all states $z\in S\setminus\{s^*\}$, we have $A^t_{\sigma}(s,z)<A^t_{\sigma'}(s,z)$. Hence, for all periods $t> T$, we have 
\[A^{t-1}_{\sigma}(s,s^*)\,=\,1-\sum_{z\in S\setminus\{s^*\}}A^{t-1}_{\sigma}(s,z)\,>\,1-\sum_{z\in S\setminus\{s^*\}}A^{t-1}_{\sigma'}(s,z)\,=\,A^{t-1}_{\sigma'}(s,s^*).\] Using $\PP_{s\sigma}(t^*\leq t)\,=\,A^{t-1}_{\sigma}(s,s^*)$ and $\PP_{s\sigma'}(t^*\leq t)\,=\,A^{t-1}_{\sigma'}(s,s^*)$, the claim follows. 

\end{document}